\documentclass[11pt]{article}

\usepackage{amsthm,amsfonts,amssymb,amsmath,color}
\usepackage{bm}
\usepackage{bbm}
\usepackage[utf8x]{inputenc}
\usepackage{enumerate}
\usepackage[english]{babel}
\usepackage{esint}
\usepackage[mathscr]{eucal}
\usepackage[numbers,sort&compress]{natbib}
\usepackage{hyperref}
\hypersetup{
	colorlinks = true, 
	urlcolor = blue, 
	linkcolor = blue, 
	citecolor = blue 
}

\usepackage{url}

\usepackage{layout}
\allowdisplaybreaks

\numberwithin{equation}{section}

\renewcommand\d{\partial}
\renewcommand\a{\alpha}
\renewcommand\b{\beta}

\newcommand\s{\sigma}

\newcommand\R{\mathbb R}

\def\O{\Omega}

\def\l{\lambda}

\def\bvp{\bm{\varphi}}

\def\bsp{\bm{\psi}}
\def\e{\varepsilon}

\newcommand\br{\begin{rem}}
	\newcommand\er{\end{rem}}
\newcommand\bp{\begin{pmatrix}}
	\newcommand\ep{\end{pmatrix}}
\newcommand\be{\begin{equation}}
	\newcommand\ee{\end{equation}}
\newcommand\ba{\begin{equation}\begin{aligned}}
		\newcommand\ea{\end{aligned}\end{equation}}

\newcommand\nn{\nonumber}

\usepackage{geometry}
\newcommand{\RR}{{\mathbb R}}

\newcommand{\GG}{{\mathbb G}}

\newcommand{\uu}{{\mathbf u}}

\newcommand{\Ov}[1]{\overline{#1}}

\newcommand{\vr}{\varrho}

\newcommand{\vu}{\vc{u}}

\newcommand{\vc}[1]{{\bf #1}}

\newcommand{\Grad}{\nabla_x}

\newcommand{\dx}{\, {\rm d} {x}}
\newcommand{\dt}{\, {\rm d} t }

\newcommand{\intO}[1]{\int_{\O} #1 \ \dx}
\newcommand{\intOe}[1]{\int_{\O_\e} #1 \ \dx}

\newcommand{\dive}{{\rm div}_x}

\newtheorem{defi}{Definition}[section]
\newtheorem{theorem}[defi]{Theorem}
\newtheorem{proposition}[defi]{Proposition}
\newtheorem{lemma}[defi]{Lemma}

\newtheorem{remark}[defi]{Remark}

\newcommand{\calB}{\mathcal{B}}
\renewcommand{\l}{\langle}
\renewcommand{\r}{\rangle}

\begin{document}
	
	\title{Homogenization of a non-homogeneous incompressible heat-conducting fluid in perforated domains}
	
	\author{Yong Lu\footnote{School of Mathematics, Nanjing University, Nanjing 210093, China, luyong@nju.edu.cn}\and Jiaojiao Pan\footnote{Institute of Applied Physics and Computational Mathematics, Beijing 100088, China, panjiaojiao.math@gmail.com}\and Luqi Wang\footnote{School of Mathematics, Nanjing University, Nanjing 210093, China, wangluqi@nju.edu.cn}}
	
	\date{}
	
	\maketitle
	
	\renewcommand{\refname}{References}
	
	\begin{abstract}
		This paper provides the study of the homogenization of the 3D non-homogeneous incompressible heat-conducting fluid in perforated domains with holes of subcritical size, where the viscosity and the heat conductivity coefficient are assumed to depend on the temperature. The diameter of the holes is of order $\varepsilon^{\alpha} \ (\alpha>3)$, where $\varepsilon > 0$ is a small parameter that measures the mutual distance between the holes. We prove that as $\varepsilon\to 0$, the limit behavior of velocity, density and temperature is governed by the original system in homogeneous domain without holes.
			\end{abstract}
	
	{\bf Keywords.} Homogenization; perforated domains; non-homogeneous incompressible heat-conducting fluid.
	\par{\bf Mathematics Subject Classification.} 35B27, 76M50, 80M40.

	
	
	\section{Introduction}\label{INTRODUCTION}
	In this paper, we study the homogenization of the non-homogeneous heat conduction equations in perforated domains containing a large number of solid obstacles (named holes) in $\mathbb{R}^{3}$. Our goal is to describe the limit behavior of weak solutions to the non-homogeneous heat conduction equations as the number of holes tends to infinity and the size of the holes tends to zero simultaneously. This process is called homogenization, and it gives rise to the limit system of fluid flow, called the homogenized system.
	
	Homogenization of various types of fluid flows in physical domains has been widely studied during the last decades. Darcy’s phenomenological law can be traced back to his experimental studies in \cite{Darcy1856}. Later, Tartar \cite{Tartar1} mathematically considered the incompressible stationary Stokes equations, where the size of holes is proportional to the mutual distance between holes, and derived Darcy's law. In the 1990s, Allaire \cite{ALL-NS1, ALL-NS2} systematically studied steady Stokes and Navier-Stokes equations in $\R^d (d \geq 2)$, and proved that the homogenized system depends on the ratio $\s_{\e}$, which is defined by
\ba \label{sigma}
\sigma _\e: = \left(\frac{\e^d}{a_\e^{d-2}}\right)^{\frac{1}{2}}, \ d \geqslant 3;\quad{\sigma _\e}: = \e \left| \log \frac{{a_\e }}{\e} \right|^{\frac{1}{2}}, \ d = 2.
\ea
Here $\e > 0$ denotes the mutual distance between the holes, and $a_\e > 0$ represents their radius. More precisely, $\lim_{\e\to 0}\s_{\e}=0$ corresponds to supercritical case, in which case the holes are relatively large, the fluid velocity eventually tends to zero, and the limit system is governed by Darcy’s law; in subcritical case, i.e., $\lim_{\e\to 0}\s_{\e}=\infty$, the holes are tiny and the limit system coincides with the original one; while in critical case, i.e., $\lim_{\e\to 0}\s_{\e}=\s_{*}\in (0,+\infty)$, the limit system is given by Brinkman's law, which combines the original system with an additional friction term, see \cite{HoeferKowalczykSchwarzacher2021} for a heuristic explanation. The first author \cite{L} proposed a unified approach to all the regimes mentioned above.
	
	Regarding compressible fluids, Masmoudi \cite{Mas-Hom} obtained the first homogenization result for the Navier-Stokes system in the regime where the hole size is proportional to the mutual distance between holes, and he derived Darcy's law. In the same regime, H\"{o}fer, Ne\v{c}asov\' {a} and Oschmann \cite{HNO2026} proved quantitative homogenization toward Darcy’s law via a relative energy method in periodically perforated domains. For the subcritical case in three spatial dimensions, related works \cite{BO2,DFL,FL1,Lu-Schwarz18, OschmannPokorny2023} showed that the limit system coincides with the original one and \cite{NP,NO} complemented similar results for two-dimensional case.  At low Mach number, Bella and Oschmann \cite{BO1} as well as H\"ofer, Kowalczyk and Schwarzacher \cite{HoeferKowalczykSchwarzacher2021} investigated the critical and supercritical case respectively. 
	
	For the homogenization of the evolutionary incompressible Navier-Stokes system, the case where the size of holes is proportional to mutual distance was considered by Mikeli\'{c} \cite{Mik91}, and he derived Darcy's law. The critical, subcritical, and supercritical regimes were considered in \cite{FeNaNe, LY}. As for non-homogeneous incompressible fluids, the supercritical, subcritical and critical case were considered in \cite{BOP26}, \cite{LuPanYang2025} and \cite{Pan2025}, respectively.
		
	Moreover, a series of complicated models describing fluid flows in homogenization have been investigated: Feireisl, Novotn\'y and Takahashi \cite{FNT-Hom} studied the full Navier-Stokes-Fourier system in the supercritical regime, Basari{\'c} and Chaudhuri \cite{BCh24} studied the subcritical regime; the first author and Pokorn\'y \cite{Lu-Pokorny} considered the subcritical case of stationary compressible Navier-Stokes-Fourier equations. Recently, the first author and Qian \cite{LY-Qian} considered the homogenization of evolutionary incompressible viscous non-Newtonian flows of Carreau type, where the size of holes is proportional to their mutual distance. For supercritical case, Darcy's law was derived in \cite{HLO2025} together with quantitative convergence rates. 
		
	As for non-homogeneous incompressible heat-conducting fluid in this paper, Feireisl, the first author and Sun \cite{FLS} investigated the critical case confined to a 3D domain and derived that the limit system is given by Brinkman's law. In the present paper, we wish to complete the study by investigating the homogenization of the 3D non-homogeneous heat-conducting fluid, where the diameter of the holes is of order $\e^{\a} \ (\a>3)$, and the mutual distance between holes is proportional to $\e$ (subcritical case).
		
         This paper is organized as follows. In Section~\ref{INTRODUCTION}, we introduce the problem formulation, the definition of weak solutions and the main results. Section~\ref{sec:unifbds} is devoted to establishing uniform bounds on the velocity, density and temperature. An important compactness result is also introduced there as a crucial ingredient in the homogenization process of continuity equation. In Sections \ref{Momentum equations in homogenized domains}, we derive the extended momentum equations in the homogenized domain. Based on the convergence of the nonlinear convective term and the strong convergence result for velocity established in Section \ref{Convergence of the nonlinear convective term}, we derive the limit heat equation in Section \ref{Limit heat equation} and prove the strong convergence of temperature in Section \ref{Strong convergence of temperature}. Finally, we obtain the limit momentum equations in Section \ref{Limit momentum equations}.
	\paragraph{{\bf Notations.}} Throughout this paper, $L^p$ and $W^{k,p}$ represent Lebesgue and Sobolev spaces, respectively. $W^{1,q}_0$ denotes the subspace of $W^{1,q}$ with zero trace.   For a domain $D \subset \R^3$, we write $\|\cdot\|_{L^p L^q}=\|\cdot\|_{L^p(0,T;L^q(D))}$ and $\|\cdot\|_{L^p W^{k,q}}=\|\cdot\|_{L^p(0,T;W^{k,q}(D))}$. For matrices $A,B \in \R^{3 \times 3}$, the inner product is defined by $A:B = \sum_{i, j=1}^3 A_{ij} B_{ij}$. We use ~$\widetilde{\cdot}$ ~to denote the zero-extension for velocity. For density, we use ~$\widetilde{\cdot}$ ~to denote a positive constant extension. Moreover, we write $L^{q}_0(D)$ to denote the subspace of $L^q(D)$ consisting of functions with zero mean:
	\be
	L^q_0(D):=\left\{f\in L^q(D): \, \int_{D} f\,\dx=0\right\}. \nn
	\ee
		
\subsection{Problem formulation}\label{Problem formulation}
In this paper, we focus on three-dimensional case and take the diameter of holes in \eqref{sigma}
\be\label{size of holes}
a_{\e}=\e^{\a} \quad\mbox{with}\quad \a>3,
\ee
which implies $\s_{\e}=\e^{\frac{3-\a}{2}}$. We consider a bounded domain $\O \subset \R^3$ of class $C^{2,\b}$ with $0<\b<1$, and introduce a family of $\e$-dependent perforated domains $\{ \O_\e \}_{\e > 0}$ as
	\begin{align}\label{defi-holes-1}
	\O_\e = \O \setminus \bigcup_{k\in K_\e} T_{\e, k},\ && K_\e:=\{\e k\in \mathbb{Z}^3|\ \overline{Q}_{k, \e} \subset \O \}, && Q_{k, \e} := \e \Big( -\frac{1}{2},\frac{1}{2} \Big)^3 + \e k,\ \ k\in \mathbb{Z}^3,
	\end{align}
	where the sets $T_{\e,k}$ represent holes or obstacles. Suppose we have the following property concerning the distribution of the holes:
	\be\label{defi-holes-2}
	T_{\e,k} := x_{\e,k} + a_\e T_0 \subset\subset B(x_{\e,k}, b_0a_{\e} ) \subset\subset Q_{k, \e} \subset \O, 
	\ee
	with
	\be\label{defi-holes-3}
	|x_{\e, k} - x_{\e, l}| \geq 2\e \ \mbox{ for any } \ k \neq l \in K_\e. 
	\ee
	Here, $b_0$ is a positive constant independent of $\e$, $ T_0 \subset \R^3$ is a bounded, simply connected $C^{2,\b}$ domain contained in $Q_{0,1}$, and $B(x,r)$ denotes the open ball centered at $x$ with radius $r>0$. Without loss of generality, we assume that $T_0 \subset B(0, \frac12)$. The diameter of each $T_{\e,k}$ is of order $\mathcal{O}(a_\e)$. Meanwhile, the mutual distance between the holes is $\mathcal{O}(\e)$. The number of holes contained in $\O$ satisfies
	\be\label{defi-holes-4}
	|K_\e| \leq C(\e^{-3} |\Omega| + o(1)),\quad \mbox{for some} ~C>0~ \mbox{independent of} ~ \e.
	\ee
	
 For given $T>0$, we now consider the following non-homogeneous incompressible heat conduction equations in $(0,T) \times \O_{\e}$:
\ba\label{NHHC}
	\begin{cases}
		\d_{t}\vr_{\e}+\dive (\vr_{\e} \vu_{\e}) = 0, \ \ \dive \vu_{\e} = 0, &\mbox{in } (0,T) \times \O_{\e},\\
		\partial_t (\vr_\e \vu_\e) + \dive (\vr_\e \vu_\e \otimes \vu_\e) + \Grad P_\e = \dive \mathbb{S}(\Theta_\e, \Grad \vu_\e) - \Theta_\e \Grad F, &\mbox{in } (0,T) \times \O_{\e},\\
		\mathbb{S}(\Theta_\e, \Grad \vu_\e) = \mu(\Theta_\e) \left( \Grad \vu_\e + \Grad^{\textup{T}} \vu_\e \right), & \mbox{in } (0,T) \times \O_{\e},\\
		- \dive \left( \kappa_\e \Grad \Theta_\e \right) = \Grad F \cdot \uu_\e,& \mbox{in } (0,T) \times \O_{\e},\\
		\vr_{\e}|_{t=0} = \vr_{\e}^{0},\quad (\vr_{\e}\vu_{\e})|_{t=0} = \vr_{\e}^{0}\vu_{\e}^{0} & \mbox{in } \O_\e.
	\end{cases}
\ea
Here the mass density $\vr_\e = \vr_\e(t,x)$, the velocity $\vu_\e = \vu_\e(t,x)$ and the temperature $\Theta_\e = \Theta_\e(t,x)$ satisfy a variant of the Navier-Stokes-Boussinesq system introduced by Chandrasekhar \cite{CHAN} (see also Ligni\`eres \cite{LIGN}). The viscosity coefficient $\mu(\cdot): \mathbb R\to \mathbb R^+$ is a Lipschitz continuous function (related to temperature $\Theta_\e$), which satisfies
\be\label{mu(theta)-low-high}
0< \underline\mu \leq \mu(\cdot) \leq \bar\mu <\infty,
\ee
where $\underline\mu$ and $\bar\mu$ are positive constants. The last equation $\eqref{NHHC}_4$ can be viewed as a quasi-static approximation of the conventional heat equation in the high P\' eclet number:
\ba\nn
\partial_t (\vr_\e \Theta_\e) + \dive (\vr_\e \Theta_\e \vu_\e) - \dive \left( \kappa \Grad \Theta_\e \right) = \Grad F \cdot \vu_\e,
\ea
where $F$ denotes the gravitational potential. We refer to \cite[Chapter 4, Section 4.3]{FENO6} for a rigorous derivation of system \eqref{NHHC} in the spatially homogeneous case $\vr \equiv 1$.

The fluid is contained in a bounded domain $\Omega_\e \subset \R^3$, and the velocity satisfies the no-slip boundary condition
\begin{equation} \label{boundary condition for u}
\vu_\e|_{\partial \Omega_\e} = 0.
\end{equation}
We extend the velocity $\vu_\e$ by
\ba\nn
\widetilde\vu_\e(t,x)= \left\{ \begin{array}{l}\vu_\e(t,x), \ x\in \Omega_\e, \\ \\
\mathbf 0, \ \ \ \ \ \ \ \ \ x\in \R^3\backslash \Omega_\e. \end{array} \right.
\ea

Since the heat conductivity coefficient is a positive constant in $\Omega_\e$, we suppose $\eqref{NHHC}_4$ is satisfied in $\Omega$, with heat conductivity coefficient $\kappa_\e$ satisfying
\ba\label{defi of k}
\kappa_\e (x) = \left\{ \begin{array}{l} \kappa_f > 0, \ \mbox{if}\ x \in \Omega_\e, \\ \\
\kappa_s > 0, \ \mbox{if}\ x \in \Omega \setminus {\Omega}_\e, \end{array} \right.
\ea
where, in general, we allow $\kappa_f \ne \kappa_s$.  For definiteness, we impose the homogeneous Dirichlet boundary condition for the temperature,
\begin{equation} \label{boundary condition for temp}
\Theta_\e |_{\partial \Omega} = 0.
\end{equation}

To establish the connection of the equations in $\O_\e$ and $\O$, firstly, we will assume that the positive constant extension of $\vr_\e^0$ and the zero extension of $\vu_\e^0$ satisfy
 \begin{eqnarray}\label{initial-rho-u}
\left\{
\begin{aligned}
&\widetilde\vr_{\e}^{0}\in L^{\infty}(\mathbb R^{3}),\quad \widetilde\vr_{\e}^{0}(x)=\overline\vr>0 \ \ \textup{in } \mathbb R^{3}\backslash{\O}_{\e},\\
&0<\underline\vr\leq\widetilde\vr_{\e}^{0}(x)\leq\overline\vr,\quad \widetilde\vr_{\e}^{0}\rightarrow \vr_{0} \ \ \textup{strongly in }L^{\infty}({\Omega}),\\
&\dive \widetilde\vu_{\e}^{0} = 0 \ \ \textup{in }\O_{\e},\quad \widetilde\vu_{\e}^{0} = \mathbf{0} \ \ \textup{in }\mathbb R^{3}\backslash{\O}_{\e},\quad \widetilde\vu_{\e}^{0}\rightarrow \vu_{0} \ \ \textup{strongly in }L^{2}(\Omega; \mathbb R^{3}),
\end{aligned}
\right .
\end{eqnarray}
where $\underline\vr$ and $\overline\vr$ are both positive constants. Suppose that the gravitational potential $F$ satisfies:
\ba\label{assumption of F}
\Grad F \in L^{\infty}(\Omega; \R^3).
\ea
Without loss of generality, we assume that the density $\vr_\e$ is extended from $\O_\e$ to $\R^{3}$ by positive constant $\overline\vr$, i.e. for $x \in \mathbb R^3 \setminus {\Omega}_\e$, the constant extension for density is as follows
\ba\nn
\widetilde\vr_{\e}(t,x)=\widetilde\vr_{\e}^{0}(x)=\Ov{\vr}>0.
\ea
Combining the extensions of  $(\vr_\e, \vu_\e)$ given above, we may therefore assume that the continuity equation in \eqref{NHHC} is satisfied in the whole space $\R^3$.

\subsection{Weak solutions}\label{Weak solutions}
Let us introduce the definition of finite energy weak solutions to the non-homogeneous incompressible heat conduction equations \eqref{NHHC}--\eqref{boundary condition for temp}.
\begin{defi}\label{fews}
Let
\ba\label{init}		
\vr_\e(0, \cdot) = \vr_\e^0, \ \vu_\e(0, \cdot) = \vu_\e^0.
\ea		
A triple $(\vr_{\e},\vu_{\e},\Theta_\e)$ is called a {\bf\emph{finite energy weak solution}} to system \eqref{NHHC}--\eqref{boundary condition for temp} provided:	
\begin{itemize}
				
\item the $(\vr_{\e},\vu_{\e},\Theta_\e)$ satisfies:
\ba \label{original regularity}
&\vr_\e \in C([0,T]; L^1(\Omega_\e)), \quad 0 < \underline{\vr} \leq \vr_\e \leq \Ov{\vr} \ \ \mbox{a.a. in}\ (0,T) \times \Omega_\e,\\
&\vu_\e \in L^\infty(0,T; L^2(\Omega_\e; \R^3)) \cap L^2(0,T; W^{1,2}_0(\Omega_\e; \R^3)),\\
&\Theta_\e \in L^\infty(0,T; W^{1,2}_0(\Omega)); 
\ea

\item for any $\psi\in C_{c}^{\infty}([0, T) \times \R^3)$, 
\be\label{continuity eq}
\int_{0}^{T}\int_{\R^3}\widetilde\vr_{\e}(\d_{t}\psi+\widetilde\vu_{\e}\cdot\Grad\psi) \dx \dt = -\int_{\R^3}\widetilde\vr_{\e}^{0}\psi(0,x) \dx;
\ee
\item for a.a. $\tau \in(0,T)$, $\dive \vu_\e = 0 ~ \mbox{in} ~ \Omega_\e;$	
			
\item for any $\bvp \in C_{c}^{\infty}([0,T) \times \O_{\e}; \R^{3})$, $\dive\bvp=0$,
\ba\label{momentum eq}
&\int_{0}^{T}\int_{\O_{\e}} \big[ \vr_{\e}\vu_{\e}\cdot\d_{t} \bvp+(\vr_{\e} \vu_{\e} \otimes \vu_{\e}):\Grad \bvp \big] \dx \dt -\int_0^T \intOe{ \mathbb{S}(\Theta_\e, \Grad \vu_\e ) : \Grad \bvp } \dt\\
&\quad= \int_0^T \intOe{ \Theta_\e \Grad F \cdot \bvp } \dt- \intOe{ \vr_\e^0 \vu_\e^0 \cdot \bvp(0, \cdot) };
\ea
			
\item for any $\phi \in C^{\infty}_c ((0,T)\times\Omega)$,
\ba\label{temp eq}
\int_0^T\intO{ \kappa_\e \Grad \Theta_\e  \cdot \Grad \phi }\dt = \int_0^T\intO{ \widetilde\vu_\e  \cdot \Grad F \phi }\dt;
\ea

\item for a.a. $\tau \in (0,T)$, there holds the energy inequality
\ba\label{energy inequality}
&\intOe{ \frac{1}{2} \vr_\e |\vu_\e|^2 (\tau, \cdot) }+ \int_0^\tau \intO{ \kappa_\e |\Grad \Theta_\e|^2 } \dt + \int_0^\tau \intOe{ \frac{\mu(\Theta_\e)}{2} |\Grad \vu_\e + \Grad^{\textup{T}} \vu_\e |^2 } \dt\\
&\quad\leq \intOe{ \frac{1}{2} \vr_\e^0 |\vu_\e^0 |^2 }.
\ea
\end{itemize}
\end{defi}	
Thanks to DiPerna-Lions theory \cite{DL} and the existing regularity of $\vr_\e$, $\vu_\e$ stated in (\ref{original regularity}), the weak formulation \eqref{continuity eq} for continuity equation admits the following renormalized variant
\begin{equation} \label{renormalized continuity eq}
\int_0^T \int_{\mathbb R^3}{ \left[ b(\widetilde\vr_\e ) \partial_t \varphi + b(\widetilde\vr_\e ) \widetilde\vu_\e \cdot \Grad \varphi \right] } \dx\dt = - \int_{\mathbb R^3}{ b(\widetilde\vr_\e^0) \varphi(0, \cdot)}\dx
\end{equation}
for any $\varphi \in C^{\infty}_c([0,T) \times \R^3)$ and any $b \in C((0,\infty))$.

Notice that, for any fixed $\e>0$, following the well-known argument in Lions' book \cite{Lions-Incom}, the existence of renormalized weak solutions can be derived.
	
\subsection{Main results} 
 
Now, we are in a position to state the main results of this paper.
\begin{theorem} \label{Main theorem}
Let $\{ \Omega_\e\}_{\e > 0}$ be a family of perforated domains specified in Section \ref{Problem formulation}. Let $( \vr_\e, \vu_\e, \Theta_\e)$ be a finite energy weak solution to the problem \eqref{NHHC}--\eqref{boundary condition for temp}. Under the assumptions of initial data and external force in \eqref{initial-rho-u}--\eqref{assumption of F}, up to a subsequence,  $(\widetilde\vr_\e, \widetilde\vu_\e, \Theta_\e)$ satisfy
\ba\label{convergence}
&\widetilde\vr_\e \to \vr ~\mbox{in}\ C([0,T]; L^p(\Omega)) \ \ \mbox{for any}\ 1 \leq p< \infty, \ \ 0 < \underline{\vr} \leq \widetilde\vr_{\e}(t, x) \leq \Ov{\vr}; \\
&\widetilde\vu_\e \to \vu \ \mbox{strongly in}\ L^2((0,T) \times \Omega; \R^3) \ \mbox{and weakly in}\ L^2(0,T; W^{1,2}_0(\Omega; \R^3));\\
&\Theta_\e \to \Theta \ \mbox{strongly in}\ L^2(0,T; W^{1,2}_0(\Omega)),
\ea
where $(\vr, \vu, \Theta)$ is a weak solution to the problem
\begin{align}\label{Darcy}
\begin{cases}
 \d_t \vr + \dive(\vr \uu) = 0,\quad \dive \vu=0, & \mbox{in } (0,T) \times \O,\\
 \partial_t (\vr \vu) + \dive (\vr \vu \otimes \vu) + \Grad P = \dive \mathbb{S}(\Theta, \Grad \vu) + \Theta \Grad F, & \mbox{in } (0,T) \times \O,\\
 \mathbb{S}(\Theta, \Grad \vu) = \mu(\Theta) \left( \Grad \vu + \Grad^{\textup{T}} \vu \right), & \mbox{in } (0,T) \times \O,\\
 -\dive \left( \kappa_f \Grad \Theta \right) = \Grad F \cdot \vu,& \mbox{in } (0,T) \times \O,
\end{cases}
\end{align}
 with boundary conditions
 \ba\label{boundary conditions for limit eq}
\vu |_{\partial \Omega} = \mathbf{0},~~\Theta |_{\partial \Omega} = 0,
\ea
and initial conditions
 \ba\label{initial conditions for limit eq}
\vr|_{t=0} = \vr_{0},~~ (\vr\vu)|_{t=0} = \vr_{0}\vu_{0}~~\mbox{in } ~\O.
\ea 
\end{theorem}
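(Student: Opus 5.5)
The plan is to pass to the limit in each equation in turn, in the order the paper's outline suggests. First, uniform bounds. From the energy inequality \eqref{energy inequality}, the bounds $\underline\mu\le\mu$ and $\kappa_\e\ge\min\{\kappa_f,\kappa_s\}$, Korn's inequality on $W^{1,2}_0(\O_\e)$, and the zero extension, we get $\widetilde\vu_\e$ bounded in $L^\infty(0,T;L^2(\O))\cap L^2(0,T;W^{1,2}_0(\O))$ and $\Theta_\e$ bounded in $L^2(0,T;W^{1,2}_0(\O))$. Here $\nabla\widetilde\vu_\e=\widetilde{\nabla\vu_\e}$ because the trace vanishes on the holes. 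Testing \eqref{temp eq} with $\Theta_\e$ and using \eqref{assumption of F} upgrades the temperature bound to $L^\infty(0,T;W^{1,2}_0)$ relative to $\|\widetilde\vu_\e\|_{L^2}$. The maximum principle for the transport equation gives $\underline\vr\le\widetilde\vr_\e\le\Ov\vr$.

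Next, the continuity equation. Since \eqref{continuity eq} holds in $\R^3$ with a divergence-free field $\widetilde\vu_\e$, renormalization \eqref{renormalized continuity eq} with $b(s)=s^2$ gives conservation of $\|\widetilde\vr_\e(t)\|_{L^2}$. Together with weak convergence of $\widetilde\vr_\e$ in $C_{\rm weak}([0,T];L^2)$ (via Arzelà–Ascoli from the equation) and strong initial convergence, this yields the strong convergence $\widetilde\vr_\e\to\vr$ in $C([0,T];L^p)$. This is the DiPerna–Lions compactness result, once we know $\widetilde\vu_\e\rightharpoonup\vu$ weakly in $L^2W^{1,2}_0$ and $\widetilde\vr_\e\widetilde\vu_\e\to\vr\vu$ weakly.

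The key step is the extension of the momentum equation to $\O$, and I expect the main obstacle to be here and in the convective term. Given $\bvp\in C_c^\infty([0,T)\times\O)$ with $\dive\bvp=0$, I would use the Allaire-type restriction/cut-off operator for subcritical holes, $\bvp_\e=\bvp-\mathcal B$-corrected truncations near each hole with cutoffs at scale $a_\e$. These satisfy $\dive\bvp_\e=0$, $\bvp_\e=0$ on the holes, and
\[
\|\bvp_\e-\bvp\|_{W^{1,q}}\lesssim \e^{\frac{3(\a-1)}{q}-\a}\quad\mbox{for } q\le 2,
\]
which tends to $0$ at $q=2$ because $\a>3$. Inserting $\bvp_\e$ into \eqref{momentum eq} and estimating the errors by the uniform bounds, the error terms vanish. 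The $\partial_t$ and convective error terms need $\bvp-\bvp_\e\to0$ in $L^2$ and $L^3$ norms, which follows from the $W^{1,2}$ smallness.

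The result is the momentum equation on $\O$ up to $o(1)$. From it I would derive an Aubin–Lions/Lions-type time-equicontinuity of $P(\widetilde\vr_\e\widetilde\vu_\e)$, where $P$ is the Leray projector, in a negative Sobolev space. Combined with the strong convergence of $\widetilde\vr_\e$ and the usual argument
\[
\int\widetilde\vr_\e|\widetilde\vu_\e|^2\to\int\vr|\vu|^2,
\]
this gives $\widetilde\vu_\e\to\vu$ strongly in $L^2$ and hence convergence of the convective term. Strong convergence of $\vr_\e\vu_\e^0$-type initial data follows from \eqref{initial-rho-u}.

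Finally, the heat equation and temperature. Write $\kappa_\e=\kappa_f+(\kappa_s-\kappa_f)\mathbf 1_{\O\setminus\O_\e}$. Since $|\O\setminus\O_\e|\lesssim\e^{3\a-3}\to0$ and $\nabla\Theta_\e$ is bounded in $L^2$, passing to the limit weakly in \eqref{temp eq} gives $-\kappa_f\Delta\Theta=\nabla F\cdot\vu$. The hole term needs care: $\int_{\O\setminus\O_\e}\nabla\Theta_\e\cdot\nabla\phi\to0$ by Cauchy–Schwarz and the vanishing measure. For strong convergence of $\Theta_\e$, test \eqref{temp eq} with $\Theta_\e$ and the limit equation with $\Theta$. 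Using $\widetilde\vu_\e\to\vu$ strongly gives $\int\kappa_\e|\nabla\Theta_\e|^2\to\int\kappa_f|\nabla\Theta|^2$. Splitting
\[
\kappa_f\|\nabla\Theta_\e-\nabla\Theta\|^2\le\int\kappa_\e|\nabla\Theta_\e|^2-2\kappa_f\int\nabla\Theta_\e\cdot\nabla\Theta+\kappa_f\int|\nabla\Theta|^2+|\kappa_f-\kappa_s|\int_{\O\setminus\O_\e}|\nabla\Theta_\e|^2
\]
is awkward because of the last term. Instead, I would use $\min\kappa\,|\cdot|^2$ together with the equation of the difference $\Theta_\e-\Theta$. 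In that equation the right side is $\nabla F\cdot(\widetilde\vu_\e-\vu)+\dive((\kappa_\e-\kappa_f)\nabla\Theta)$, and the latter tends to $0$ in $L^2L^2$ by dominated convergence. This gives strong convergence in $L^2W^{1,2}_0$.

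It then follows that $\mu(\Theta_\e)\to\mu(\Theta)$ a.e. and boundedly, so
\[
\mu(\Theta_\e)(\nabla\widetilde\vu_\e+\nabla^T\widetilde\vu_\e)\rightharpoonup\mu(\Theta)(\nabla\vu+\nabla^T\vu),
\]
and $\Theta_\e\nabla F\cdot\bvp$ converges. This closes the limit momentum equation with its initial data.
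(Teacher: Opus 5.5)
\paragraph{Verdict.} Your proposal is correct. It follows the paper's architecture: energy and Korn bounds, Lions' compactness for $\widetilde\vr_\e$, extension of the momentum equation to $\O$ by a cut-off plus a Bogovskii correction, Leray-projected time compactness combined with $\int\widetilde\vr_\e|\widetilde\vu_\e|^2\to\int\vr|\vu|^2$, and an energy argument for $\Theta_\e$. It departs from the paper in two places.

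\paragraph{Momentum step.} You build a local, Allaire-type restriction $\bvp_\e=g_\e\bvp-\mathcal B_{\rm loc}(\bvp\cdot\Grad g_\e)$ at $q=2$, with rate $\e^{(\a-3)/2}$. The paper instead applies the global perforated-domain operator $\calB_\e$ of Lemma \ref{lem-div}, with H\"older exponents built on $q>2$ close to $2$, and records the remainder as the functional bound \eqref{est-F-3D}. The two are equivalent in substance. Two details should be stated correctly.
\begin{enumerate}
\item The convective error involves $\Grad(\bvp-\bvp_\e)$, not $\bvp-\bvp_\e$. It must be paired as $\|\widetilde\vr_\e\widetilde\vu_\e\otimes\widetilde\vu_\e\|_{L^{4/3}L^2}\|\Grad(\bvp-\bvp_\e)\|_{L^4L^2}$. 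An $L^3$ pairing on the gradient would fail, since $\|\Grad g_\e\|_{L^3}\sim\e^{-1}$.
\item The remainder contains $\int\widetilde\vr_\e\widetilde\vu_\e\cdot\d_t(\bvp-\bvp_\e)$, which depends on $\d_t\bvp$. So you do not get the uniform bound on $\d_t\mathbb P(\widetilde\vr_\e\widetilde\vu_\e)$ that Lemma \ref{Aubin-Lions argument} requires; you get only equicontinuity up to $O(\e^{(\a-3)/2})$. This is exactly what the paper's splitting \eqref{decomposition-u} packages. You can reproduce it by considering $t\mapsto\int\widetilde\vr_\e\widetilde\vu_\e(t)\cdot\bvp_\e\dx$ for time-independent $\bvp$.
\end{enumerate}

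\paragraph{Temperature step.} Your difference identity
\[
\int_0^T\!\!\int_{\O}\kappa_\e|\Grad(\Theta_\e-\Theta)|^2\dx\dt=\int_0^T\!\!\int_{\O}(\widetilde\vu_\e-\vu)\cdot\Grad F\,(\Theta_\e-\Theta)\dx\dt-\int_0^T\!\!\int_{\O}(\kappa_\e-\kappa_f)\Grad\Theta\cdot\Grad(\Theta_\e-\Theta)\dx\dt
\]
needs only $\|(\kappa_\e-\kappa_f)\Grad\Theta\|_{L^2L^2}\to0$. This follows from $|\O\setminus\O_\e|\to0$ and absolute continuity of the integral. Note that it is this flux, not its divergence, that tends to $0$ in $L^2L^2$. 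Testing with $\Theta_\e-\Theta\in L^2(0,T;W^{1,2}_0(\O))$ is admissible by density. You therefore avoid the $W^{2,2}$ elliptic regularity that the paper invokes to bound $K_3$ and to justify testing with $\Theta$. Your route is slightly more elementary; the paper's extra regularity of $\Theta$ is not needed for the theorem.
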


\begin{remark}
The definition of the weak solution to system \eqref{Darcy}-\eqref{initial conditions for limit eq}  is similar to the weak solution $(\vr_{\e},\vu_{\e},\Theta_\e)$ given in Definition \ref{Weak solutions}.
\end{remark}

The rest of the paper is devoted to the proof of above theorem. Throughout the sequel, $C$ denotes a generic positive constant independent of $\e$, which may change from line to line.

\section{Uniform bounds}\label{sec:unifbds}
Firstly, we will consider the uniform bounds on $\widetilde\vr_\e$. Employing initial condition hypothesis in \eqref{initial-rho-u}, we can take 
\ba\nn
b(\widetilde\vr_{\e})=[\widetilde\vr_{\e}-\overline\vr]^{+},\quad b(\vr_{\e})=-[\widetilde\vr_{\e}-\underline\vr]^{-}
\ea
as test functions in the renormalized equation \eqref{renormalized continuity eq} to deduce
\ba \label{estimate for rho}
0 < \underline{\vr} \leq \widetilde\vr_\e(t,x) \leq \Ov{\vr} \ \ \mbox{for a.a.}\ (t,x)\in (0,T) \times \O,
\ea
uniformly in $\e \to 0$. Combining the lower bounds in \eqref{mu(theta)-low-high} for $\mu(\Theta_\e)$ and in \eqref{estimate for rho} for $\widetilde\vr_\e$, and using the energy inequality \eqref{energy inequality}, we deduce that, for a.a. $\tau \in (0,T)$, 
\ba\label{energy inequality estimate}
& \underline{\vr} /2\intOe{|\vu_\e|^2(\tau, \cdot)}+ \min \{ \kappa_s, \kappa_f \}\int_0^\tau \intO{ |\Grad \Theta_\e|^2 } \dt + \underline\mu/2\int_0^\tau \intOe{|\Grad \vu_\e + \Grad^{\textup{T}} \vu_\e |^2 } \dt\\
&\quad\leq\intOe{ \frac{1}{2} \vr_\e |\vu_\e|^2(\tau, \cdot)}+ \int_0^\tau \intO{ \kappa_\e |\Grad \Theta_\e|^2 } \dt + \int_0^\tau \intOe{ \frac{\mu(\Theta_\e)}{2} |\Grad \vu_\e + \Grad^{\textup{T}} \vu_\e |^2 } \dt\\
&\quad\leq \intOe{ \frac{1}{2} \vr_\e^0 |\vu_\e^0|^2}.
\ea
 Applying Korn inequality gives
\begin{equation} \label{eatimates for u and th}
 \left\|\vu_\e \right\|_{L^\infty(0,T; L^2(\Omega_\e))} +\|\sqrt{\vr_\e}\uu_\e \|_{L^\infty(0,T;L^2(\Omega_\varepsilon))}+ \int_0^T \|\Grad \Theta_\e \|^2_{L^2(\Omega)}\dt+\int_0^T \|\Grad \vu_\e \|^2_{L^2(\Omega_\e)}\dt \leq C,
\end{equation}
where the constant in \eqref{eatimates for u and th} depends on $\underline{\vr}$, $\kappa_s, \kappa_f$, $\underline\mu$ and initial data $(\vr_\e^0,\vu_\e^0)$.  Thus the Zero-extension $\widetilde\vu_\e$ satisfies
\ba\label{eatimates for u}
& \|\widetilde\uu_\e \|_{L^\infty(0,T;L^2(\Omega))} \leq C,\quad\|\Grad\widetilde\uu_\e \|_{L^2(0,T;L^2(\Omega))} \leq C.
\ea
Up to a subsequence, we obtain
\ba \label{Covergence of u}
&\widetilde\vu_\e \to \vu \ \mbox{weakly-(*) in}\ L^\infty(0,T; L^2(\Omega; \R^3)) \ \mbox{and weakly in}\ L^2(0,T; W^{1,2}_0(\Omega;\R^3)).
\ea
Moreover, by means of Poincar\'e inequality, \eqref{eatimates for u and th} gives
\ba\label{uniform bound of theta}
\|\Theta_\e\|_{L^2(0,T; W_0^{1,2}(\Omega))} \leq C\|\Grad\Theta_\e\|_{L^2(0,T; L^2(\Omega))} \leq C,
\ea
 from which we can derive 
\ba\label{weak convergence of th}
\Theta_\e \to \Theta\ \mbox{weakly in}\ L^2(0,T; W^{1,2}_0(\Omega)). 
\ea

Note that $\widetilde\vu_\e \equiv \mathbf{0}$ and $\widetilde\vr_\e\equiv\Ov{\vr}>0$ outside $\Omega_\e$. By \eqref{estimate for rho}, we have $0\leq\widetilde\vr_\e\leq C$ a.a. in $(0,T)\times\Omega$. The convergence of $\widetilde\vr_\e^0$ also holds in $L^p(\Omega)$ for all $1\leq p\leq\infty$. Based on the above analysis, we have properties as follows
\ba\label{assume-compact}
&0\leq\widetilde\vr_\e\leq C \ \mbox{a.a. on}\ (0,T)\times\Omega;\\
&\dive \widetilde\uu_\e =0\ \mbox{a.a. on}\ (0,T)\times\Omega,\ \ \|\widetilde \uu_\varepsilon \|_{L^2 (0,T;W_0^{1,2}(\Omega))} \leq C;\\
& \partial_t \widetilde\vr_\e +\dive (\widetilde\vr_\e\widetilde\uu_\e) = 0,\ \ \mbox{in}\ \mathcal{D}'((0,T)\times\RR^3 );\\
&\widetilde\vr_\e^0\to\vr^0\ \mbox{strongly in}\ L^1(\Omega),
\ \ \widetilde\uu_\e\to\uu\  \mbox{weakly in}\ L^2 (0,T;W_0^{1,2}(\Omega;\mathbb R^3)).
\ea
Now we recall the following compactness result (see Theorem 2.4 in \cite{Lions-Incom}): 
\begin{proposition}\label{compact-density}
If \eqref{assume-compact} holds, then $\widetilde\vr_\e$ converges strongly in $C([0,T];L^p(\Omega))$ for all $1\leq p<\infty$ to the unique solution $\vr$ bounded on $(0,T)\times\Omega$, of
\ba\nn
\begin{cases}\nn
\partial_t \vr +\dive (\vr\uu) = 0,&\mbox{in}\ \mathcal{D}'((0,T)\times\R^3),\\
\vr\in C([0,T];L^1(\Omega)), & \vr(0)=\vr^0 \ \mbox{a.a. in} \ \Omega.\\
\end{cases}
\ea
\end{proposition}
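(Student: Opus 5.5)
We prove Proposition~\ref{compact-density} by the DiPerna--Lions renormalization argument combined with the uniqueness of weak solutions to the transport equation with a Sobolev, divergence-free field. The plan is to get compactness in time from the equation, a weak limit in space, and then upgrade to strong convergence by comparing the weak limits of $\widetilde\vr_\e$ and $\widetilde\vr_\e^2$.

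\textbf{Step 1: time compactness and the limit equation.} From $\partial_t\widetilde\vr_\e=-\dive(\widetilde\vr_\e\widetilde\uu_\e)$, the bound $0\le\widetilde\vr_\e\le C$ and $\|\widetilde\uu_\e\|_{L^2W^{1,2}_0}\le C$, we get $\partial_t\widetilde\vr_\e$ bounded in $L^2(0,T;W^{-1,2})$. Hence $\widetilde\vr_\e$ is equicontinuous in time with values in $W^{-1,2}$, and bounded in $L^\infty$. By Arzel\`a--Ascoli we extract $\widetilde\vr_\e\to\vr$ in $C([0,T];L^p_{\rm weak}(\Omega))$ for every $p<\infty$. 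Since $\widetilde\vr_\e\to\vr$ in $C([0,T];W^{-1,2})$ and $\widetilde\uu_\e\rightharpoonup\uu$ weakly in $L^2W^{1,2}$, the product satisfies $\widetilde\vr_\e\widetilde\uu_\e\rightharpoonup\vr\uu$ in $\mathcal D'$. Passing to the limit therefore gives $\partial_t\vr+\dive(\vr\uu)=0$ with $\vr(0)=\vr^0$.

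\textbf{Step 2: the square of the density.} Renormalize with $b(s)=s^2$, which the DiPerna--Lions theory allows since $\widetilde\uu_\e\in L^2W^{1,2}$ and $\dive\widetilde\uu_\e=0$. This gives $\partial_t\widetilde\vr_\e^2+\dive(\widetilde\vr_\e^2\widetilde\uu_\e)=0$. Applying Step 1 to $\widetilde\vr_\e^2$ yields a weak limit $\overline{\vr^2}$ solving the same transport equation, with initial datum $(\vr^0)^2$; this uses the strong convergence of $\widetilde\vr_\e^0$.

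\textbf{Step 3: uniqueness.} By DiPerna--Lions uniqueness for bounded solutions with the field $\uu\in L^2W^{1,2}_0$, $\dive\uu=0$, the function $\vr^2$ also solves this equation with the same datum, by renormalizing the limit equation. Hence $\overline{\vr^2}=\vr^2$.

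Two consequences follow.
\begin{itemize}
\item Weak convergence together with convergence of the squares upgrades to strong convergence: $\widetilde\vr_\e\to\vr$ in $L^2$, uniformly in $t$, since the convergence in Step 1 is in $C([0,T];L^2_{\rm weak})$ and at each $t$ we have $\|\widetilde\vr_\e(t)\|_2\to\|\vr(t)\|_2$.
\item Interpolating with the $L^\infty$ bound gives convergence in $C([0,T];L^p)$ for all $p<\infty$.
\end{itemize}
Uniqueness also shows that the whole sequence converges.

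The main obstacle is justifying the uniform-in-time norm convergence. One approach is to get it from the continuity of the $L^2$ norm in time, which follows from conservation $\|\vr(t)\|_2=\|\vr^0\|_2$ and the same for $\widetilde\vr_\e$ because the field is divergence-free. Combined with weak continuity, this gives strong convergence uniformly in $t$. The subtle point is that conservation holds on $\R^3$ with the constant extension, so we must localize or work on a fixed ball where all fields vanish near the boundary.
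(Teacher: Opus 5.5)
Your proposal is correct and is the standard DiPerna--Lions argument: compactness of $\widetilde\vr_\e$ in $C([0,T];L^p_{\rm weak})$, identification of the weak limit of $\widetilde\vr_\e^2$ through uniqueness of bounded solutions of the limit transport equation, and the resulting upgrade to strong convergence. This is the proof behind Theorem 2.4 of Lions' book, which the paper simply cites instead of proving the proposition. Your treatment of uniformity in time is also sound: you use conservation of $\|\cdot\|_{L^2}$ on a ball containing $\overline\Omega$, where $\widetilde\uu_\e=\mathbf 0$ and $\widetilde\vr_\e\equiv\overline\vr$. This conservation gives strong continuity of $\vr$, which is exactly what the uniform control of the cross term $(\widetilde\vr_\e(t),\vr(t))$ requires.
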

Using Proposition \ref{compact-density}, we obtain the compactness result
\ba\label{Covergence of rho}
\widetilde\vr_\e \to \vr \ \mbox{strongly in} \ C([0,T];L^p(\Omega))\ \mbox{for all}\ 1\leq p<\infty,
\ea
with $\vr\in C([0,T];L^p(\Omega)), \ 1\leq p<\infty, \ \vr(0)=\vr^0 \ \mbox{a.a. in} \ \Omega.$ 

Then, using \eqref{Covergence of u}, \eqref{weak convergence of th} and \eqref{Covergence of rho}, we have
\ba \label{Covergence of solu}
&\widetilde\vr_\e \to \vr \ \mbox{in}\ C([0,T]; L^p(\Omega)) \ \mbox{for any}\ 1 \leq p< \infty,\\
&\widetilde\vu_\e \to \vu \ \mbox{weakly-(*) in}\ L^\infty(0,T; L^2(\Omega; \R^3)) \ \mbox{and weakly in}\ L^2(0,T; W^{1,2}_0(\Omega;\R^3)),\\
&\Theta_\e \to \Theta\ \mbox{weakly in}\ L^2(0,T; W^{1,2}_0(\Omega)). 
\ea
The standard Aubin-Lions argument immediately implies that $(\vr$, $\vu)$ satisfies \eqref{continuity eq}. Meanwhile, by DiPerna-Lions theory \cite{DL}, the renormalized equation \eqref{renormalized continuity eq} holds for $(\vr$, $\vu)$. Furthermore, we apply \eqref{estimate for rho} and \eqref{eatimates for u} to find
\ba\label{est-rho u}
&\|\widetilde\vr_\e\widetilde \uu_\e \|_{L^2 L^6}\leq \|\widetilde\vr_\e \|_{L^\infty L^\infty} \|\widetilde \uu_\e \|_{L^2 L^6}\leq C,
~~\|\widetilde\vr_\e\widetilde \uu_\e \|_{L^\infty L^2}\leq\|\widetilde\vr_\e\|_{L^\infty L^\infty} \|\widetilde \uu_\e\|_{L^\infty L^2}\leq C.
\ea

\section{Asymptotic limit}\label{Equation in homogeneous domain}
In this section, we will show the limit equations of the non-homogeneous heat conduction equations in the homogenized domain $\O$, together with the corresponding strong convergence results. 

 Actually, Proposition \ref{compact-density} shows the limit continuity equation in Theorem \ref{Main theorem} directly, i.e.,
\ba\nn
\partial_t \vr +\dive (\vr\uu)= 0,
\ea
which means the limit continuity equation remains unchanged.  At the same time, the divergence free condition $\dive \vu=0$ follows from $\dive \widetilde\vu=0$. Thus we focus on the momentum equations and heat equation in homogenized domain.

\subsection{Momentum equations in homogenized domain}\label{Momentum equations in homogenized domains}
To deal with the momentum equations in $\Omega$, one may introduce a family of functions ${\{ {g_\varepsilon}\} _{\varepsilon>0}}$ that vanish on the holes and converge to $1$ in an appropriate Sobolev space $W^{1,q}(\Omega)$. For any $\bvp\in C^\infty_c([0,T)\times\Omega ;\mathbb{R}^3)$ with $\dive \bvp =0$, we decompose it into
\be\nn
\bvp = {g_\varepsilon }\bvp + (1-{g_\varepsilon})\bvp.
\ee
For the terms involving $(1-g_\varepsilon)\bvp$, we need to show that they converge to zero. However, this decomposition destroys the divergence-free property of $\bvp$: $\dive (g_{\e} \bvp) \neq 0$. Thus $g_{\e} \bvp$ is not an admissible test function in \eqref{momentum eq}. To solve this problem, we recall a Bogovskii type operator in perforated domain $\Omega_{\e}$ (see Theorem 2.3 in \cite{DFL} and Proposition 2.2 in \cite{Lu-Schwarz18}):
\begin{lemma}\label{lem-div}
Let $\{\Omega_\e\}_{\e>0}$ be a family of perforated domains defined through \eqref{defi-holes-1}--\eqref{defi-holes-4} with $a_{\e} = \e^{\alpha}$, $\alpha\geq 1$. Then there exists a linear operator
\ba\nn
\calB_\e : L_0^{q}(\Omega_\e) \to W_0^{1,q}(\Omega_\e; \R^3),\ 1 < q < \infty,
\ea
such that for any $f\in L_0^{q}(\Omega_\e)$,
\ba\label{pro-div1}
\dive \calB_\e(f) =f \ \mbox{in} \ \Omega_\e,~~\|\calB_\e(f)\|_{W_0^{1,q}(\Omega_\e; \R^3)}\leq C\, \big(1+\e^{\frac{(3-q)\a-3}{q}}\big)\|f\|_{L^q(\Omega_\e)},
\ea
for some constant $C$ independent of $\e$.
\end{lemma}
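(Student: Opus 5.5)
Since Lemma \ref{lem-div} is quoted from \cite{DFL,Lu-Schwarz18}, I only sketch how to construct $\calB_\e$. The plan is to glue a Bogovskii operator on the unperforated domain $\O$ to local Bogovskii operators near each hole, and to track how the local constants scale with $a_\e/\e$.

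\textbf{Step 1 (global problem).} Given $f\in L^q_0(\O_\e)$, let $\widetilde f$ be its zero extension. Then $\widetilde f\in L^q_0(\O)$. The classical Bogovskii operator $\calB_\O$ on the $C^{2,\b}$ domain $\O$ produces $\mathbf v=\calB_\O(\widetilde f)\in W^{1,q}_0(\O;\R^3)$ with
\[
\dive \mathbf v=\widetilde f,\qquad \|\mathbf v\|_{W^{1,q}}\le C\|f\|_{L^q}.
\]
The only defect is that $\mathbf v$ need not vanish on the holes $T_{\e,k}$.

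\textbf{Step 2 (local correction near each hole).} Fix $k$. Choose a cut-off $\chi_{\e,k}$ that equals $1$ on $B(x_{\e,k},b_0a_\e)$, is supported in $B(x_{\e,k},2b_0a_\e)$, and satisfies $|\nabla\chi_{\e,k}|\le C a_\e^{-1}$. Since $\alpha\ge1$, for small $\e$ these balls lie in $Q_{k,\e}$. Replace $\mathbf v$ near the hole by $(1-\chi_{\e,k})(\mathbf v-\langle\mathbf v\rangle_{k})$, where $\langle\mathbf v\rangle_k$ is the mean of $\mathbf v$ over the annulus $A_{\e,k}=B(x_{\e,k},2b_0a_\e)\setminus B(x_{\e,k},b_0a_\e)$. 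Constants are harmless because the correction only needs to be local. The divergence error
\[
g_k = \dive\big(\chi_{\e,k}(\mathbf v-\langle\mathbf v\rangle_k)\big)
\]
is supported in the annulus. Its mean vanishes by the divergence theorem, because $\chi_{\e,k}(\mathbf v-\langle\mathbf v\rangle_k)$ vanishes on the outer boundary. It is also supported away from $T_{\e,k}$.

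For the constant part, the boundary term has zero flux, since the ball boundary integrates $\langle\mathbf v\rangle_k\cdot n$ to $0$. So I correct $g_k$ with a Bogovskii operator on the scaled annulus $A_{\e,k}$, which lies in $\O_\e$. By scaling, the Bogovskii constant on an annulus of size $a_\e$ is independent of $a_\e$ in the $\dot W^{1,q}\to L^q$ norm. The bound $\|g_k\|_{L^q}\le C\|\nabla\mathbf v\|_{L^q(A_{\e,k})}$ follows from Poincar\'e on the annulus, which gives $a_\e^{-1}\|\mathbf v-\langle\mathbf v\rangle\|\le C\|\nabla \mathbf v\|$.

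However, $(1-\chi)(\mathbf v-\langle\mathbf v\rangle_k)$ does not vanish outside the ball unless I undo the constant. So I instead subtract the constant through a field supported in the cell $Q_{k,\e}$. This is done by a Stokes-type corrector $\mathbf w_k$ that equals $\langle\mathbf v\rangle_k$ on $B(x_{\e,k},b_0a_\e)$, vanishes outside $B(x_{\e,k},\e/2)$, and is divergence free, for instance Allaire's cell capacity potential.

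\textbf{Step 3 (estimating the constant part, the main obstacle).} The key quantity is
\[
\|\nabla \mathbf w_k\|_{L^q}\lesssim |\langle\mathbf v\rangle_k|\,\big(\mathrm{cap}_q\text{-type factor}\big)\sim |\langle\mathbf v\rangle_k|\,a_\e^{(3-q)/q}
\]
for $q<3$. Using Sobolev/H\"older to bound $|\langle\mathbf v\rangle_k|$ by $\e^{-3/q}\|\mathbf v\|_{W^{1,q}(Q_{k,\e})}$ and summing over $k$ gives the factor $\e^{-3/q}a_\e^{(3-q)/q}=\e^{\frac{(3-q)\a-3}{q}}$. For $q>3$ one uses instead the bound $|\mathbf v|\le C\|\mathbf v\|_{W^{1,q}}$, so the factor is $O(1)$. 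These combine to $1+\e^{\frac{(3-q)\a-3}{q}}$.

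\textbf{Step 4 (assembly).} Summing the disjoint local corrections and adding them to $\mathbf v$ gives $\calB_\e(f)\in W^{1,q}_0(\O_\e)$ with $\dive\calB_\e(f)=f$ and the stated bound. Linearity is preserved because every step is linear in $f$.
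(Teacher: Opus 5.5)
The paper gives no proof of this lemma; it quotes it from the works it cites. Your architecture is a workable route: take $\mathbf v=\calB_\O(\widetilde f)$, then correct near each hole at scale $a_\e$, splitting off the mean of $\mathbf v$. However, two steps fail as written, and the $q>3$ accounting is wrong.

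\textbf{The annulus correction.} $g_k=\dive\big(\chi_{\e,k}(\mathbf v-\langle\mathbf v\rangle_k)\big)$ is not supported in the annulus. On $B(x_{\e,k},b_0a_\e)$, where $\chi_{\e,k}=1$, it equals $\dive\mathbf v=\widetilde f$, which is arbitrary on $B(x_{\e,k},b_0a_\e)\setminus T_{\e,k}\subset\O_\e$. Moreover $\int_{A_{\e,k}}g_k\,dx=-\int_{B(x_{\e,k},b_0a_\e)}\widetilde f\,dx$ need not vanish, so no Bogovskii operator on $A_{\e,k}$ applies. The fix is to solve on the perforated ball
$D_{\e,k}:=B(x_{\e,k},2b_0a_\e)\setminus\overline{T_{\e,k}}=x_{\e,k}+a_\e\big(B(0,2b_0)\setminus\overline{T_0}\big)$.
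This is a dilate of a fixed domain, so $\|\Grad\calB_{D_{\e,k}}g\|_{L^q}\le C\|g\|_{L^q}$ with $C$ independent of $\e$. Compatibility holds there because
\[
\int_{D_{\e,k}}g_k\,dx=-\int_{\partial T_{\e,k}}\big(\mathbf v-\langle\mathbf v\rangle_k\big)\cdot n\,dS=-\int_{T_{\e,k}}\widetilde f\,dx=0 .
\]
What matters is that $\widetilde f=0$ on the hole itself. The resulting bound is $\|g_k\|_{L^q}\le\|f\|_{L^q(D_{\e,k})}+C\|\Grad\mathbf v\|_{L^q(A_{\e,k})}$; you dropped the $f$-term.

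\textbf{The bound on the mean.} In Step 3, the inequality $|\langle\mathbf v\rangle_k|\le C\e^{-3/q}\|\mathbf v\|_{W^{1,q}(Q_{k,\e})}$ is false for $q<3$, because $\langle\mathbf v\rangle_k$ averages over a set of diameter $\sim a_\e\ll\e$. A bump of height $1$ and width $a_\e$ at $x_{\e,k}$ has $\langle\mathbf v\rangle_k\approx 1$. The right-hand side is then $\approx\e^{((3-q)\a-3)/q}$, which tends to $0$ exactly in the regime $\sigma>0$ used in the paper. The correct bound, from the scale-invariant Sobolev--Poincar\'e inequality on $Q_{k,\e}$ and H\"older on $A_{\e,k}$, is
\[
|\langle\mathbf v\rangle_k|\le C\big(\e^{-3/q}\|\mathbf v\|_{L^q(Q_{k,\e})}+a_\e^{1-3/q}\|\Grad\mathbf v\|_{L^q(Q_{k,\e})}\big).
\]
After multiplying by $a_\e^{(3-q)/q}$ and summing over $k$, the second term contributes only $C\|\Grad\mathbf v\|_{L^q(\O)}$, so your conclusion survives.

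\textbf{The case $q>3$.} Your claim that the factor is $O(1)$ is wrong. There $|\langle\mathbf v\rangle_k|\le C\|\mathbf v\|_{W^{1,q}(\O)}$, but summing $\|\Grad\mathbf w_k\|_{L^q}^q\lesssim|\langle\mathbf v\rangle_k|^q a_\e^{3-q}$ over $|K_\e|\lesssim\e^{-3}$ holes gives exactly the divergent factor $\e^{((3-q)\a-3)/q}$. It must diverge: fields bounded in $W^{1,q}_0$ with $q>3$ and vanishing on all holes tend to $0$ uniformly, so their divergences cannot converge to a fixed $f\neq 0$. For every $q$, the ``$1$'' in the lemma comes from $\calB_\O$ and the fluctuation part.

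\textbf{Smaller points.} The case $q=3$, which you omit, works the same way with $L^s$ ($s$ large) in place of $L^{q^*}$, at cost $(\e/a_\e)^{3/s}\le\e^{-1}$. Also, instead of Allaire's Stokes potential, whose $W^{1,q}$ bounds for $q\neq2$ need separate work, you can take $\mathbf w_k=\mathrm{curl}\big(\chi_{\e,k}\,\tfrac12\langle\mathbf v\rangle_k\times(x-x_{\e,k})\big)$. This field is divergence free, equals $\langle\mathbf v\rangle_k$ on $B(x_{\e,k},b_0a_\e)$, is supported in $B(x_{\e,k},2b_0a_\e)$, and satisfies $\|\Grad\mathbf w_k\|_{L^q}\le C|\langle\mathbf v\rangle_k|\,a_\e^{(3-q)/q}$ for every $q$.
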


In order to derive the limit momentum equations in $[0,T)\times\Omega$, we give the following proposition to describe the extended momentum equations with remainder term.
\begin{proposition}\label{moment-equa}
Under the assumptions in Theorem \ref{Main theorem},  $(\widetilde\vr_\e,\widetilde \uu_\varepsilon, \Theta_\e)$ satisfies the following equation:
\ba\label{momentum eq-1}
&\int_{0}^{T}\int_{\O} \big[ \widetilde\vr_{\e}\widetilde\vu_{\e}\cdot\d_{t} \bvp+(\widetilde\vr_{\e} \widetilde\vu_{\e} \otimes \widetilde\vu_{\e}):\Grad \bvp \big] \dx\dt -\int_0^T \intO{ \mathbb{S}(\Theta_\e, \Grad \widetilde\vu_\e) : \Grad \bvp }\dt \\
&\quad= \int_0^T \intO{\Theta_\e \Grad F \cdot \bvp } \dt
- \intO{ \widetilde\vr_\e^0 \widetilde\vu_\e^0 \cdot \bvp(0, \cdot) }+\l\GG_\varepsilon, \bvp \r,
\ea
where $\bvp\in C^\infty_c([0,T)\times\Omega ;\mathbb{R}^3)$ with $\dive \bvp =0$. Moreover,
\begin{equation}\label{est-F-3D}
|\l\GG_\varepsilon, \bvp \r| \leq C\varepsilon ^\sigma \big(\|\partial _t \bvp \|_{L^{\frac 43} L^{2}}+\|\Grad \bvp\|_{L^{4} L^{r_1}}+\|\bvp (0,x)\|_{L^{r_2}} \big), \ \forall \, \bvp\in C_{c}^{\infty}([0,T)\times\Omega;\mathbb{R}^3).
\end{equation}
Here $\sigma:=((3-q)\alpha-3)/q >0$ for some $q>2$ close to $2$, and $ 2<r_1<3$ is given by ${1}/{r_{1}} = {5}/{6} -{1}/{q}$, and $1<r_2<\infty$ satisfies $1/2=1/q+1/{r_2}$.
\end{proposition}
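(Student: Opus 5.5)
We will follow the standard decomposition outlined before Lemma \ref{lem-div}. We fix cut-off functions $g_\e$ that vanish on the holes and equal $1$ away from them. Concretely, $g_\e = 1$ outside $\bigcup_k B(x_{\e,k}, 2b_0a_\e)$, $g_\e=0$ on $B(x_{\e,k}, b_0 a_\e)$, and $|\Grad g_\e|\le C a_\e^{-1}$. Since the support of $1-g_\e$ has measure $\lesssim \e^{-3}a_\e^3$, we get for $1\le p<3$
\[
\|1-g_\e\|_{L^p(\O)} \le C\e^{\frac{3(\a-1)}{p}}, \qquad \|\Grad g_\e\|_{L^p(\O)}\le C\e^{\frac{(3-p)\a-3}{p}} .
\]
Given $\bvp\in C_c^\infty([0,T)\times\O)$ with $\dive\bvp=0$, we set
\[
\bvp_\e := g_\e\bvp - \calB_\e\big(\Grad g_\e\cdot\bvp\big).
\]
This requires that $\Grad g_\e\cdot\bvp$ have zero mean on $\O_\e$. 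That holds because $\int_{\O_\e}\Grad g_\e\cdot\bvp = -\int_{\O_\e} g_\e\dive\bvp=0$, using that $g_\e\bvp$ vanishes near the holes and on $\d\O$. Thus $\bvp_\e\in W^{1,q}_0(\O_\e)$ and $\dive\bvp_\e=0$, so it is admissible in \eqref{momentum eq} after a density argument in time and space. Lemma \ref{lem-div} combined with the bound on $\Grad g_\e$ gives
\[
\|\bvp-\bvp_\e\|_{W^{1,q}}\le C\e^{\sigma}\|\bvp\|_{W^{1,\infty}}\text{-type},
\]
and more precisely $\|\calB_\e(\Grad g_\e\cdot\bvp)\|_{W^{1,q}}\le C(1+\e^\sigma)\|\Grad g_\e\|_{L^q}\|\bvp\|_{L^\infty}\lesssim \e^{2\sigma}$-order. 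Since $\calB_\e$ is linear and commutes with $\d_t$, the same bounds hold for $\d_t$.

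We define $\l\GG_\e,\bvp\r$ as the difference between the left side of \eqref{momentum eq-1} and its right side without $\GG_\e$. Because $\widetilde\vu_\e=0$ outside $\O_\e$ and \eqref{momentum eq} holds for $\bvp_\e$, this difference equals the same expression evaluated at $\bvp-\bvp_\e=(1-g_\e)\bvp+\calB_\e(\Grad g_\e\cdot\bvp)$. It consists of five terms:
\[
\int\!\!\int\widetilde\vr_\e\widetilde\vu_\e\cdot\d_t(\bvp-\bvp_\e),\quad \int\!\!\int\widetilde\vr_\e\widetilde\vu_\e\otimes\widetilde\vu_\e:\Grad(\bvp-\bvp_\e),\quad \int\!\!\int\mathbb S:\Grad(\bvp-\bvp_\e),
\]
\[
\int\!\!\int\Theta_\e\Grad F\cdot(\bvp-\bvp_\e),\quad \int\widetilde\vr_\e^0\widetilde\vu_\e^0\cdot(\bvp-\bvp_\e)(0).
\]
Each is bounded by Hölder's inequality together with the uniform bounds \eqref{eatimates for u}, \eqref{uniform bound of theta} and \eqref{est-rho u}.

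\begin{itemize}
\item \emph{Time-derivative term.} We use $\|\widetilde\vr_\e\widetilde\vu_\e\|_{L^4L^3}$, which follows by interpolating $L^\infty L^2$ with $L^2L^6$. This is paired with $\|\d_t(\bvp-\bvp_\e)\|_{L^{4/3}L^{3/2}}$, which is controlled by $\e^\sigma\|\d_t\bvp\|_{L^{4/3}L^2}$ using the $W^{1,q}\subset L^{3/2}$ estimate.
\item \emph{Convective term.} We use $\|\widetilde\vu_\e\otimes\widetilde\vu_\e\|_{L^{4/3}L^{r}}$ with $\frac1r=\frac12+\frac16\cdot\ldots$; the interpolated exponents are chosen so that Hölder against $\Grad(\bvp-\bvp_\e)\in L^4L^{q'}$-type closes. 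This is the origin of $\frac1{r_1}=\frac56-\frac1q$: we put $\Grad\bvp$ in $L^{r_1}$ and $\Grad g_\e$ in $L^q$.
\item \emph{Viscous and force terms.} The viscous term uses $\mathbb S\in L^2L^2$ and the force term uses $\Theta_\e\in L^2L^6$. Both are handled similarly with $\|\Grad(\bvp-\bvp_\e)\|_{L^2L^{q}}$-type bounds, dominated by the $L^4L^{r_1}$ norm on a bounded time interval.
\item \emph{Initial-data term.} Hölder with $\frac12=\frac1q+\frac1{r_2}$, namely $\widetilde\vu^0_\e\in L^2$, $(1-g_\e)\in L^q$ and $\bvp(0)\in L^{r_2}$, gives the $\|\bvp(0)\|_{L^{r_2}}$ contribution.
\end{itemize}

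The main obstacle is the bookkeeping of exponents. Every product must split as (uniformly bounded factor) times (a factor containing $1-g_\e$ or $\Grad g_\e$ in $L^q$, which yields $\e^\sigma$) times (a norm of $\bvp$ among the three on the right of \eqref{est-F-3D}). For the Bogovskii part, $\|\calB_\e(\Grad g_\e\cdot\bvp)\|_{W^{1,s}}\le C\e^{\sigma}\|\Grad g_\e\|\ldots$ must be applied at the correct integrability $s$ near $2$. This is why $q>2$ is taken close to $2$: for $\a>3$ the choice keeps $\sigma=((3-q)\a-3)/q>0$, since at $q=2$ one gets $(\a-3)/2>0$, while still allowing the needed Sobolev embeddings. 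I would first verify the convective term, which is the tightest, and then fix $q$ accordingly.
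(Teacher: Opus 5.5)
Your overall scheme is the paper's: the same cut-off $g_\e$, the same divergence-free corrector $g_\e\bvp-\calB_\e(\nabla_x g_\e\cdot\bvp)$, the same zero-mean observation, and the same H\"older splittings for the convective and initial-data terms ($\nabla_x\bvp\in L^{r_1}$, $\bvp\in L^{r_2}=L^{r_1^*}$, $1-g_\e\in L^{q^*}$, $\nabla_x g_\e\in L^q$).

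\textbf{The gap: the Bogovskii part of the time-derivative term.} It does not close as you describe it. You need
$\|\calB_\e(\nabla_x g_\e\cdot\partial_t\bvp)\|_{L^{3/2}}\lesssim\e^\sigma\|\partial_t\bvp\|_{L^2}$.
You propose to get this from the $W^{1,q}$ bound of Lemma~\ref{lem-div} and $W^{1,q}\subset L^{3/2}$, and more generally to apply the lemma ``at integrability near $2$''. That would require bounding $\|\nabla_x g_\e\cdot\partial_t\bvp\|_{L^s}$ for some $s\ge 2$ by $\|\partial_t\bvp\|_{L^2}$. H\"older then needs $\nabla_x g_\e\in L^p$ with $1/p=1/s-1/2\le 0$. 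The only options are $\|\nabla_x g_\e\|_{L^\infty}\sim\e^{-\alpha}$ or a norm of $\partial_t\bvp$ that does not appear in \eqref{est-F-3D}.

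\textbf{The fix.} Apply Lemma~\ref{lem-div} at the low exponent $r_4\in(1,2)$ defined by $1/r_4=1/q+1/2$. Then
\begin{itemize}
\item $\|\nabla_x g_\e\cdot\partial_t\bvp\|_{L^{r_4}}\le\|\nabla_x g_\e\|_{L^q}\|\partial_t\bvp\|_{L^2}\le C\e^\sigma\|\partial_t\bvp\|_{L^2}$;
\item the Bogovskii constant $1+\e^{((3-r_4)\alpha-3)/r_4}$ stays bounded, because $(3-r_4)\alpha>\alpha>3$;
\item $W^{1,r_4}_0\hookrightarrow L^{r_3}$ with $1/r_3=1/q+1/6<2/3$, so the pairing with $\widetilde\vr_\e\widetilde\vu_\e\in L^4L^3$ closes.
\end{itemize}
So the exponent at which $\calB_\e$ is used must be chosen term by term: $2$ for the gradient, force and initial terms, and $r_4$ (near $1$) for $\partial_t$. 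Its admissibility rests on $(3-s)\alpha-3>0$ at that exponent, not on $\sigma$.

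\textbf{Smaller points.}
\begin{itemize}
\item The viscous term must be paired in $L^2L^2$, since $\mathbb S$ is only in $L^2L^2$. Use $\|\nabla_x g_\e\otimes\bvp\|_{L^2}\le\|\nabla_x g_\e\|_{L^q}\|\bvp\|_{L^{r_2}}$ together with Lemma~\ref{lem-div} at exponent $2$. A bound of ``$L^2L^q$-type'' with $q>2$ would need $\|\bvp\|_{L^\infty}$, which $\|\nabla_x\bvp\|_{L^4L^{r_1}}$ does not control because $r_1<3$.
\item The Bogovskii operator never contributes smallness; only $\|\nabla_x g_\e\|_{L^q}$ (or $\|1-g_\e\|_{L^p}$) does. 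Hence $C(1+\e^\sigma)\|\nabla_x g_\e\|_{L^q}\|\bvp\|_{L^\infty}$ is of order $\e^\sigma$, not $\e^{2\sigma}$. This is harmless, since only $\e^\sigma$ is needed.
\item The bounds on $g_\e$ hold for every $p$. You actually use them at $p=6$ and $p=q^*>6$, outside the range $1\le p<3$ you state.
\end{itemize}
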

\begin{proof} 
Let $\bvp \in C_c^\infty ([0,T)\times\Omega;{\mathbb{R}^3})$ with $\dive\bvp = 0$. Combining the distribution of holes in \eqref{defi-holes-1}--\eqref{defi-holes-4}, there exist cut-off functions $\{ g_\varepsilon \}_{\varepsilon>0} \subset C^\infty(\R^{3})$ such that $0\leq g_{\e} \leq 1$ and
\begin{equation}\label{defi-g}
 g_\varepsilon= 0 \ \ \mbox{on} \ \ \bigcup_{k \in K_\varepsilon} T_{\e,k},\quad g_\varepsilon = 1 \ \ \mbox{on} \ \ (\bigcup_{k \in K_\varepsilon} B(x_{\e,k},\delta_0 \varepsilon^\alpha))^c,\quad |\Grad g_{\e}| \leq C \e^{-\alpha}.
\end{equation}
Then for each $1\leq q \leq \infty$, there hold
\begin{equation}\label{est-g}
\|g_\varepsilon- 1\|_{L^q(\R^{3})} \leq C \varepsilon^{\frac{3\alpha-3}{q}},\quad
\|\Grad g_\varepsilon\|_{L^q(\R^{3})} \leq C\varepsilon ^{\frac{3\alpha-3}{q}-\alpha}.
\end{equation}
Now we estimate
\ba\nn
I^\varepsilon :&=\int_{0}^{T}\int_{\O} \big[ \widetilde\vr_{\e}\widetilde\vu_{\e}\cdot\d_{t} \bvp+(\widetilde\vr_{\e} \widetilde\vu_{\e} \otimes \widetilde\vu_{\e}):\Grad \bvp \big] \dx\dt-\int_0^T \intO{ \mathbb{S}(\Theta_\e, \Grad \widetilde\vu_\e) : \Grad \bvp }\dt\\
& \quad- \int_0^T \intO{ \Theta_\e \Grad F \cdot \bvp } \dt
+\intO{ \widetilde\vr_\e^0 \widetilde\vu_\e^0 \cdot \bvp(0, \cdot) }.
\ea
Firstly, we decompose $\bvp$ into $\bvp = {g_\varepsilon }\bvp + (1-{g_\varepsilon})\bvp$ and this allows us to rewrite 
\ba\nn
I^\varepsilon :&=\int_{0}^{T}\int_{\O_{\e}} \big[ \widetilde\vr_{\e}\widetilde\vu_{\e}\cdot\d_{t} (g_\e\bvp)+(\widetilde\vr_{\e} \widetilde\vu_{\e} \otimes \widetilde\vu_{\e}):\Grad (g_\e\bvp) \big] \dx\dt-\int_0^T \intOe{ \mathbb{S}(\Theta_\e, \Grad \widetilde\vu_\e) : \Grad (g_\e\bvp) }\dt\\
& \quad- \int_0^T \intOe{ \Theta_\e \Grad F \cdot (g_\e\bvp) } \dt+\intOe{ \widetilde\vr_\e^0 \widetilde\vu_\e^0 \cdot (g_\e\bvp)(0, \cdot) }+\sum_{i=1}^{5} {I_i},
\ea
with
\ba\nn
I_1 & = \int_0^T \int_\Omega \widetilde\vr_{\e}\widetilde\vu_{\e}\cdot(1-g_\e)\d_{t} \bvp\dx \dt, \\
I_2 & = \int_0^T \int_\Omega \widetilde\vr_\e\widetilde \uu_\e \otimes \widetilde \uu_\e :(1-g_\e)\Grad \bvp-\widetilde\vr_\e\widetilde\uu_\e \otimes \widetilde \uu_\e:\Grad g_\e\otimes\bvp \dx\dt ,\\
I_3 &= -\int_0^T \int_\Omega \mathbb{S}(\Theta_\e, \Grad \widetilde\vu_\e) :(1-g_\varepsilon)\Grad \bvp
- \mathbb{S}(\Theta_\e, \Grad \widetilde\vu_\e):\Grad g_\varepsilon \otimes \bvp \dx \dt ,\\
I_4 &= \int_0^T \int_\Omega \Theta_\e \Grad F \cdot(g_\varepsilon-1)\bvp \dx\dt,\\
I_5 &= -\int_{\Omega} \widetilde{\vr}_\e^0\widetilde{\uu}_\e^0 \cdot (g_\varepsilon-1)\bvp (0,x)\dx.
\ea

Observing that
\ba\label{0-integrals}
\int_{\Omega_{\e}} \bvp \cdot \Grad g_{\e}\dx = \int_{\Omega_{\e}} \dive(\bvp g_{\e}) \dx = 0,
\ea
we apply Bogovskii operator in Lemma \ref{lem-div} to separate a divergence-free part from $g_{\e} \bvp$, that is,
\ba\nn
\bvp_{1} := g_\e\bvp - \bvp_{2},\quad \bvp_{2} := \calB_{\e} (\dive(\bvp g_{\e})) = \calB_{\e} (\bvp \cdot \Grad g_\e).
\ea
where $\bvp_{1} \in C_{c}^{\infty}([0,T); W^{1,2}_{0}(\Omega_{\e};\mathbb R^3))$ and $\dive \bvp_{1} = 0$. As a result, $\bvp_{1}$ is an an admissible test function in \eqref{momentum eq} and it follows from weak formulation \eqref{momentum eq} that
\ba\nn
I^\varepsilon&=\int_{0}^{T}\int_{\O_{\e}} \big[\vr_{\e}\vu_{\e}\cdot\d_{t} \bvp_{1} +(\vr_{\e} \vu_{\e} \otimes \vu_{\e}):\Grad \bvp_{1} \big] \dx\dt-\int_0^T \intOe{ \mathbb{S}(\Theta_\e, \Grad \vu_\e) : \Grad \bvp_{1} }\dt\\
&\quad - \int_0^T \intOe{\Theta_\e \Grad F \cdot \bvp_{1}} \dt+\intOe{\vr_\e^0 \vu_\e^0 \cdot \bvp_{1} (0, \cdot) }+\sum_{ i=1}^{5} {I_i} +\sum_{ i=6}^{10} {I_i}\\
&=\sum_{ i=1}^{5} {I_i} +\sum_{ i=6}^{10} {I_i},
\ea
where
\ba\nn
&I_6 =\int_0^T \int_{\Omega_\e}\vr_\e\uu_\e \partial_t \bvp_{2} \dx \dt,\quad &&
~~I_7 = \int_0^T \int_{\Omega_\e} \vr_\e\uu_\e \otimes \uu_\e : \Grad \bvp_{2} \dx \dt , \\
&I_8 =-\int_0^T \int_{\Omega_\e} \mathbb{S}(\Theta_\e, \Grad\vu_\e):\Grad \bvp_{2} \dx \dt ,\quad &&
~~I_9 = -\int_0^T \int_{\Omega_{\e}} \Theta_\e \Grad F \cdot \bvp_{2}\dx\dt,\\
&I_{10} =\int_{\Omega _\e} \vr_\e^0\uu_\e^0 \cdot \bvp_{2} (0,x)\dx.
\ea

Now we estimate $I_{i}$ term by term. Since $\alpha>3$, there exists $q\in(2, 3)$ close to $2$ such that
\be\nn
\sigma:=\frac{(3 - q)\alpha-3}{q} >0.
\ee
 By \eqref{estimate for rho}, \eqref{eatimates for u} and \eqref{est-g}, together with interpolation inequality and Sobolev embedding theorem, we have
\ba\nn
|I_1| \leq C \|\widetilde\vr_\e \widetilde \uu_\e \|_{L^4 L^3}\|1- g_\e \|_{L^6}\|\partial_t\bvp\|_{L^{\frac43} L^2}\leq C \e^\sigma\|\partial _t\bvp \|_{L^{\frac43} L^2} .
\ea
Let $q^*$ be Sobolev conjugate component of $q$ with $\frac{1}{q^{*}} = \frac{1}{q} - \frac{1}{3}$. Clearly $6<q^{*}<\infty$. By \eqref{eatimates for u} and \eqref{est-rho u}, using interpolation inequality and Sobolev embedding theorem again, we obtain 
\ba
|I_2| \leq C\|\widetilde\vr_\e \widetilde \uu_\e \otimes \widetilde \uu_\e \|_{L^{\frac{4}{3}} L^2}\big(
\|g_\e- 1\|_{L^{q^{*}}}\|\Grad \bvp \|_{L^4L^{r_{1}}} + \|\Grad g_\e\|_{L^q}\|\bvp \|_{L^{4} L^{r_2}}\big) \leq C \e ^\sigma\|\Grad \bvp \|_{L^{4} L^{r_1}} ,
\nonumber\ea
with
\ba\label{r1}
\frac{1}{r_{1}} = \frac{1}{2} - \frac{1}{q^{*}} = \frac{5}{6} - \frac{1}{q}>\frac{1}{3}, \quad \frac{1}{r_{2}} = \frac{1}{2} - \frac{1}{q} = \frac{1}{r_{1}^{*}}.
\ea
Similarly, utilizing \eqref{mu(theta)-low-high}, \eqref{assumption of F}, \eqref{eatimates for u} and \eqref{est-g}, we have 
\ba\nn
|I_3| &\leq C\|\mathbb{S}(\Theta_\e, \Grad \widetilde\vu_\e) \|_{L^2 L^2} \big(\|g_\varepsilon - 1\|_{L^{q^*}}\|\Grad \bvp\|_{L^2 L^{r_1}} +\|\Grad g_\varepsilon\|_{L^q}\|\bvp\|_{L^2 L^{r_2}}\big)\\
&\leq C\|\mu(\Theta_\e) \|_{L^\infty L^\infty} \|\Grad \widetilde\vu_\e\|_{L^2 L^2} \big(\|g_\varepsilon - 1\|_{L^{q^*}}\|\Grad \bvp\|_{L^2 L^{r_1}} +\|\Grad g_\varepsilon\|_{L^q}\|\bvp\|_{L^2 L^{r_2}}\big)\\
&\leq C\varepsilon ^\sigma\|\Grad \bvp\|_{L^2 L^{r_1}},
\ea
and
\ba\nn
|I_4| \leq C\|\Theta_\e \|_{L^\infty L^2} \|\Grad F\|_{ L^\infty}\|g_\e- 1\|_{L^q}\|\bvp \|_{L^2 L^{r_2}}
\leq C \e ^\sigma\|\Grad\bvp \|_{L^2 L^{r_1}}.
\ea
It follows from \eqref{initial-rho-u} that
\ba\nn
|I_5| \leq C\|\widetilde\vr^0_\e \|_{L^\infty} \|\widetilde{\uu}_\e^0\|_{L^2}\|g_\e- 1\|_{L^q}\|\bvp(0,x)\|_{L^{r_2}}
\leq C \e ^\sigma\|\bvp(0,x)\|_{L^{r_2}}.
\ea

Next, we will repeatedly use properties of the Bogovskii operator $\calB_{\e}$ in Lemma \ref{lem-div} to estimate $I_{i}$ $(i=6,7,8,9,10)$. Since Bogovskii operator $\calB_{\e}$ only acts on spatial variable, by \eqref{0-integrals}, we have
 \ba\nn
 \d_{t} \bvp_{2} = \d_{t} \calB_{\e}(\bvp \cdot \Grad g_{\e}) = \calB_{\e}(\d_{t}\bvp \cdot \Grad g_{\e}).
 \ea
 Since $q>2$ close to $2$, there exist $r_{3} > \frac 32$ close to $\frac 32$ and $r_{4}>1$ close to $1$ such that
$$
 \frac{1}{r_{3}} = \frac{1}{r_{4}} - \frac 13, \quad \frac{1}{q} + \frac{1}{2} = \frac{1}{r_{4}}.
$$
Actually, \eqref{0-integrals} shows that $\d_{t}\bvp \cdot \Grad g_{\e}$ has zero integral mean. Then using {\eqref{pro-div1}}, \eqref{est-g} and Sobolev embedding, we have
 \ba\nn
|I_6|& \leq\|\vr_\e\uu_\e \|_{L^4 L^3} \|\d_{t} \bvp_{2} \|_{L^{\frac{4}{3}} L^{r_{3}}} 
\leq C \|\d_{t} \bvp_{2} \|_{L^{\frac{4}{3}} W^{1, r_{4}}} \\
& \leq C \Big(1+\e^{\frac{(3-r_{4})\a-3}{r_{4}}}\Big)\|\d_{t}\bvp \cdot\Grad g_{\e}\|_{L^{\frac 43} L^{r_{4}}} \\
& \leq C\|\Grad g_{\e}\|_{L^{q}}\|\d_{t} \bvp \|_{L^{\frac 43} L^{2}} \leq C \varepsilon^\sigma\|\d_{t} \bvp \|_{L^{\frac 43} L^{2}},
\ea
 where we used the fact $(3-r_{4})\a > \a >3$. At the same time, \eqref{0-integrals} also shows that $\bvp \cdot \Grad g_{\e}$ has zero integral mean. Employing \eqref{pro-div1}, similar arguments give
\ba
|I_7|& \leq\|\vr_\e\uu_\e \otimes \uu_\e \|_{L^{\frac{4}{3}} L^2} \|\Grad \calB_{\e}(\bvp \cdot \Grad g_{\e}) \|_{L^{4} L^{2}} \leq C \|\bvp \cdot\Grad g_{\e}\|_{L^{4} L^{2}} \\
& \leq C\|\Grad g_{\e}\|_{L^{q}}\|\bvp \|_{L^{4} L^{r_2}} \leq C \varepsilon^\sigma\|\Grad\bvp \|_{L^{4} L^{r_1}},\nn
\ea
and
\ba
|I_8|& \leq C\|\mathbb{S}(\Theta_\e, \Grad\vu) \|_{L^{2} L^2} \|\Grad \calB_{\e}(\bvp \cdot \Grad g_{\e}) \|_{L^{2} L^{2}} \leq C \|\bvp \cdot\Grad g_{\e}\|_{L^{2} L^{2}} \\
& \leq C\|\Grad g_{\e}\|_{L^{q}}\|\bvp \|_{L^{4} L^{r_2}} \leq C \varepsilon^\sigma\|\Grad\bvp \|_{L^{4} L^{r_1}},\nn
\ea
where we used 
\ba\nn
\|\mathbb{S}(\Theta_\e, \Grad\vu) \|_{L^{2} L^2}=\|\mu(\Theta_\e) \left( \Grad \vu_\e + \Grad^{\textup{T}} \vu_\e \right) \|_{L^{2} L^2}\leq C \|\Grad \vu_\e \|_{L^{2} L^2}\leq C.
\ea
Combining the properties of $\Theta_\e$ in \eqref{uniform bound of theta} and $\Grad F$ in \eqref{assumption of F}, we can find
\ba\nn
|I_9|& \leq C\|\Theta_\e \|_{L^\infty L^2} \|\Grad F\|_{L^\infty} \|\calB_{\e}(\bvp \cdot \Grad g_{\e}) \|_{L^{4} L^{2}} \leq C \|\bvp \cdot\Grad g_{\e}\|_{L^{4} L^{2}} \\
& \leq C\|\Grad g_{\e}\|_{L^{q}}\|\bvp \|_{L^{4} L^{r_2}} \leq C \varepsilon^\sigma\|\Grad\bvp \|_{L^{4} L^{r_1}}.
\ea
Additionally,
\ba\nn
|I_{10}|&\leq C\|\vr^0_\e \|_{L^\infty} \|\uu^0_\e\|_{L^2} \|\calB_{\e}(\bvp(0,x) \cdot \Grad g_{\e}) \|_{L^{2}} \leq C \|\bvp(0,x) \cdot\Grad g_{\e}\|_{L^{2}} \\
& \leq C\|\Grad g_{\e}\|_{L^{q}}\|\bvp(0,x) \|_{L^{r_2}} \leq C \varepsilon^\sigma\|\bvp (0,x)\|_{L^{r_2}}.
\ea

Finally, we put together the above estimates for $I_{i}$ $(i=1,\cdots,10)$ to conclude
\ba\nn
 |I^\varepsilon| \leq C\varepsilon ^\sigma \big(\|\partial _t \bvp \|_{L^{\frac43} L^{2}}
+\|\Grad \bvp\|_{L^{4} L^{r_1}}+\|\bvp (0,x)\|_{L^{r_2}} \big),
\ea
which yields our desired result \eqref{est-F-3D}.
\end{proof}

\subsection{Convergence of the nonlinear convective term}\label{Convergence of the nonlinear convective term}
 To pass to the limit as $\e\to 0$ for momentum equations in the homogenized domain, we shall establish the convergence of the nonlinear convective term $\widetilde \vr_\e\widetilde \uu_{\e}\otimes \widetilde \uu_{\e}$, where an  Aubin-Lions type argument (see Lemma 5.1 in \cite{Lions-com}) is needed:
\begin{lemma}\label{Aubin-Lions argument}
Let $g^n\to g$ weakly in $L^{p_1}(0,T;L^{p_2}(\Omega))$, $h^n \to h$ weakly in $L^{q_1}(0,T;L^{q_2}(\Omega))$,
where $1 \leq p_1,p_2 \leq +\infty$ and
\be\nn
\frac{1}{p_1}+\frac{1}{q_1}=\frac{1}{p_2}+\frac{1}{q_2}=1.
\ee
Additionally, we assume that
\be\nn
\frac{\d g^n}{\d t}\mbox{ is bounded in } L^1(0,T;W^{-m,1}(\Omega)) \mbox{ for some } m\geq 0 \mbox{ independent of } n,
\ee
and
\be\nn
\|h^n-h^n(\cdot+\xi,t)\|_{L^{q_1}(0,T;L^{q_2}(\Omega))}\to 0\ \ \mbox{as } |\xi|\to 0,
\text{ uniformly in } n.
\ee
Then there holds
\be\nn
g^n h^n \to gh \mbox{ in } {\cal D'}((0,T)\times \Omega).
\ee
\end{lemma}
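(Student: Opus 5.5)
The plan is to fix a test function $\phi\in C_c^\infty((0,T)\times\Omega)$ and show $\int_0^T\!\int_\Omega g^n h^n\phi\,\dx\dt\to\int_0^T\!\int_\Omega gh\phi\,\dx\dt$. I would do this by regularizing $h^n$ in space only, so that the space-translation hypothesis on $h^n$ and the time-derivative bound on $g^n$ each provide the compactness in the variable where the other sequence has none.

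\textbf{Step 1 (regularization).} Let $\eta_\delta$ be a standard spatial mollifier, with $\delta<\tfrac12\,{\rm dist}({\rm supp}\,\phi,\partial\Omega)$, and set $h^n_\delta:=h^n*_x\eta_\delta$. Split
$$\int g^nh^n\phi=\int g^n(h^n-h^n_\delta)\phi+\int g^nh^n_\delta\phi .$$
By Minkowski's inequality,
$$\|(h^n-h^n_\delta)\phi\|_{L^{q_1}L^{q_2}}\le C_\phi\sup_{|\xi|\le\delta}\|h^n-h^n(\cdot+\xi,t)\|_{L^{q_1}L^{q_2}}.$$
Since $g^n$ is bounded in $L^{p_1}L^{p_2}$, Hölder's inequality makes the first term at most $C\,\omega(\delta)$ with $\omega(\delta)\to0$ uniformly in $n$. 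The limit $h$ inherits the same modulus by weak lower semicontinuity, so the corresponding term for $gh$ is also small.

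\textbf{Step 2 (moving the mollifier onto $g^n$).} For fixed $\delta$, write
$$\int g^nh^n_\delta\phi=\int G^n_\delta\,h^n,\qquad G^n_\delta:=(g^n\phi)*_x\check\eta_\delta .$$
Then $\int G^n_\delta h^n$ converges to $\int G_\delta h$ provided $G^n_\delta\to G_\delta$ strongly in $L^{p_1}(0,T;L^{p_2})$, and for the limit functions $\int G_\delta h=\int gh_\delta\phi$.

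\textbf{Step 3 (compactness of $G^n_\delta$).} In space, $G^n_\delta$ is bounded in $L^{p_1}(0,T;C^k)$ for every $k$, with support in a fixed compact set. In time,
$$\partial_tG^n_\delta=((\partial_tg^n)\phi+g^n\partial_t\phi)*\check\eta_\delta ,$$
and mollifying the bound $\partial_tg^n\in L^1(0,T;W^{-m,1})$ turns it into an $L^1(0,T;C^k)$ bound, because $\eta_\delta$ absorbs the $m$ derivatives at a cost $C(\delta)$. Since $C^{k}$ embeds compactly into $L^{p_2}$ on the fixed support, the Aubin--Lions--Simon lemma gives relative compactness of $\{G^n_\delta\}_n$ in $L^{p_1}(0,T;L^{p_2})$ when $p_1<\infty$. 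The weak limit identifies the strong limit as $G_\delta$.

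\textbf{Conclusion.} Letting $n\to\infty$ and then $\delta\to0$ gives the claim.

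The main obstacle is the endpoint case $p_1=\infty$, equivalently $q_1=1$, in Step 3. Strong convergence in $L^\infty$ in time is then required. I would obtain it as follows. The $L^1(0,T;C^k)$ bound on $\partial_tG^n_\delta$ makes $t\mapsto G^n_\delta(t)$ uniformly of bounded variation with values in a compact subset of $C^{k-1}$. A Helly-type selection gives pointwise-in-$t$ convergence, uniform in $x$. I would then pair this with the $L^1$-in-time weak convergence of $h^n$, using equi-integrability in time of $\|h^n(t)\|_{L^{q_2}}$ if necessary, which follows from a short-time version of the argument. If this becomes delicate, I would fall back on the setting of Lions's Lemma 5.1, where this case is handled, since in the application here all exponents are finite.
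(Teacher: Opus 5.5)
\textbf{Overall.} The paper does not prove this lemma; it only invokes Lemma 5.1 of Lions's compressible-fluids volume. Your argument is a self-contained version of the standard proof:
\begin{itemize}
\item mollify $h^n$ in space, and control $h^n-h^n_\delta$ uniformly in $n$ by the translation hypothesis (the same bound holds for $h$ by weak lower semicontinuity);
\item move the mollifier onto $g^n\phi$;
\item obtain strong compactness of $G^n_\delta=(g^n\phi)*\check\eta_\delta$ from the Aubin--Lions--Simon lemma, since mollification turns the $L^1(0,T;W^{-m,1})$ bound on $\partial_t g^n$ into an $L^1(0,T;C^k)$ bound.
\end{itemize}
For $p_1<\infty$ this is correct and complete. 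It covers the only use of the lemma in the paper, where $g^n=(\widetilde\varrho_\varepsilon\widetilde{\mathbf u}_\varepsilon)^{(1)}$, $h^n=\widetilde{\mathbf u}_\varepsilon$, $p_1=4$, $q_1=4/3$ and $p_2=q_2=2$. Compared with the bare citation, your proof makes clear that nothing beyond Simon's compactness lemma is needed there.

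\textbf{The endpoint $p_1=\infty$, $q_1=1$.} The statement includes this case, and your argument does not close it as written. Uniform integrability in time of $t\mapsto\|h^n(t)\|_{L^{q_2}}$ is genuinely needed here, not just ``if necessary''. It also does not follow from a ``short-time version of the argument''. It is a property of $h^n$ alone, coming from the weak convergence of $h^n$ in $L^1(0,T;L^{q_2}(\Omega))$. If it failed, you could find disjoint sets $E_k\subset(0,T)$ and $n_k\to\infty$ with $\int_{E_k}\|h^{n_k}\|_{L^{q_2}}\,dt\ge c>0$. Take norming functionals $\psi_k$ supported on $E_k$ and the bounded operator $f\mapsto(\int_{E_k}\langle\psi_k,f\rangle\,dt)_k$ into $\ell^1$. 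It would send $h^{n_k}$ to a weakly convergent sequence in $\ell^1$ whose $k$-th entries stay $\ge c/2$, so it cannot converge in norm. This contradicts Schur's theorem.

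Once you have uniform integrability, Helly's theorem is not needed. Fix a compact $K\subset\Omega$. Then $G^n_\delta$ is bounded in $L^\infty(0,T;C^1_K)$ and $\partial_tG^n_\delta$ is bounded in $L^1(0,T;C_K)$. Simon's lemma with any finite $r$ gives $G^n_\delta\to G_\delta$ in $L^r(0,T;C_K)$, hence in measure in $t$, with $\|G^n_\delta(t)\|_{L^{p_2}}\le M$ for a.e. $t$. Split $(0,T)$ into $A_n=\{t:\|G^n_\delta(t)-G_\delta(t)\|_{L^{p_2}}>\eta\}$ and its complement:
\begin{itemize}
\item on $A_n$ the contribution is at most $2M\int_{A_n}\|h^n\|_{L^{q_2}}\,dt$, which tends to $0$ by uniform integrability;
\item on the complement it is at most $\eta\sup_n\|h^n\|_{L^1L^{q_2}}$.
\end{itemize}
This completes the endpoint case. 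Falling back on Lions's Lemma 5.1 here would be circular as a proof of the statement itself, although it is exactly what the paper does.
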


 We observe that some uniform estimates for the time derivative can be deduced from Proposition \ref{moment-equa}. For any $\bsp\in C_c^\infty ((0,T)\times\Omega ;{\mathbb{R}^3})$ with $\dive\bsp=0$, Proposition \ref{moment-equa} shows
\ba\nn
\left| \l \partial _t (\widetilde \vr_\e\widetilde \uu_\e), \bsp \r \right| 
&\leq \int_0^T \int_\Omega \left| \widetilde \vr_\e\widetilde \uu_\e\otimes \widetilde \uu_\varepsilon :\Grad \bsp \right| + \left|\mathbb{S}(\Theta_\e, \Grad \widetilde\vu_\e):\Grad \bsp \right| + \left| \Theta_\e \Grad F \cdot \bsp \right| \dx\dt +| \l\GG_\varepsilon, \bsp \r|\\
&\leq \|\widetilde \vr_\e\widetilde \uu_\e\otimes \widetilde \uu_\varepsilon\|_{L^{\frac43} L^2}\|\Grad \bsp\|_{L^4 L^2} +\|\mathbb{S}(\Theta_\e, \Grad \widetilde\vu_\e)\|_{L^2 L^2}\|\Grad \bsp\|_{L^2 L^2}\\
&~~+\|\Theta_\e \Grad F\|_{L^2 L^2}\|\Grad \bsp\|_{L^2 L^2}+ C\e^\sigma\big(\|\partial_t \bsp \|_{L^{\frac43} L^2}+\|\Grad \bsp\|_{L^{4} L^{r_1}} ) \\
&\leq C\|\Grad \bsp\|_{L^4 L^2} + C\e^\sigma\big(\|\partial_t \bsp \|_{L^{\frac43} L^2}+\|\Grad \bsp\|_{L^{4} L^{r_1}} 
\big) \\
& \leq C\|\Grad \bsp\|_{L^{4} L^{r_1}} + C \e^\sigma\|\partial_t \bsp \|_{L^{\frac43} L^2},
\ea
where $\sigma>0$ and $r_{1} \in (2,3)$. Let $\mathbb P$ be the Leray-Helmholtz projection operator onto subspace of divergence free vector fields on $\Omega$. Thus we have the following decomposition
\ba\label{decomposition-u}
\mathbb P(\widetilde \vr_\e\widetilde \uu_\e) = (\widetilde \vr_\e \widetilde \uu_\e)^{(1)}
+ \e ^\sigma (\widetilde \vr_\e \widetilde \uu_\e)^{(2)},
\ea
where $\partial_t (\widetilde\vr_\e\widetilde \uu_\e) ^{(1)}$ is uniformly bounded in $L^{\frac{4}{3}}(0,T; W^{-1,r_{1}'}(\Omega)) $ and $(\widetilde \vr_\e\widetilde\uu_\e)^{(2)}$ is uniformly bounded in $L^4(0,T; L^{2}(\Omega))$, which implies 
\be\label{remainder1}
\e ^\sigma(\widetilde \vr_\e\widetilde\uu_\e)^{(2)}\to 0 \ \mbox{strongly in} \ L^4(0,T; L^{2}(\Omega;\R^3)).
\ee
Since $\widetilde \vr_\e\widetilde\uu_\e$ is uniformly bounded in $L^{\infty}(0,T; L^{2}(\Omega)) \cap L^2(0,T; L^6(\Omega))$ (see \eqref{est-rho u}), the same is true for $\mathbb P(\widetilde \vr_\e\widetilde\uu_\e)$. Consequently, $(\widetilde \vr_\e\widetilde\uu_\e)^{(1)} =\mathbb P(\widetilde \vr_\e\widetilde\uu_\e) - \e^\sigma(\widetilde \vr_\e\widetilde\uu_\e)^{(2)}$ is uniformly bounded in $L^4(0,T; L^{2}(\Omega))$. Using \eqref{decomposition-u}--\eqref{remainder1}, we have
\ba\label{conv-P(ru)(1)}
(\widetilde \vr_\e\widetilde\uu_\e)^{(1)} \to \overline{\mathbb P(\vr\uu)}\ \mbox{weakly in} \ L^4(0,T; L^{2}(\Omega;\R^3)).
\ea
Then we need to verify $\overline{\mathbb P(\vr\uu)}=\mathbb P(\vr\uu)$. Indeed, for any test function $\bsp\in C_c^\infty ((0,T)\times\Omega ;{\mathbb{R}^3})$ with $\dive\bsp=0$, we have
\ba\nn
\int_0^T \int_{\Omega} (\widetilde \vr_\e\widetilde\uu_\e)^{(1)}\cdot \bsp\dx\dt &= \int_0^T \int_{\Omega} \mathbb P(\widetilde \vr_\e\widetilde\uu_\e)\cdot \bsp\dx\dt- \int_0^T \int_{\Omega} \e^\sigma(\widetilde \vr_\e\widetilde\uu_\e)^{(2)}\cdot \bsp\dx\dt \\
&:=W_1+W_2.
 \ea
 By virtue of \eqref{Covergence of solu}, as $\e\to 0$, we have
 \ba\nn
W_1 &= \int_0^T \int_{\Omega} \mathbb P(\widetilde \vr_\e\widetilde\uu_\e)\cdot \bsp\dx\dt=\int_0^T \int_{\Omega}\widetilde\vr_\e \widetilde\uu_\e \cdot \bsp \dx\dt\\
&~{\to}\int_0^T \int_{\Omega}\vr\uu \cdot \bsp \dx\dt=\int_0^T \int_{\Omega} \mathbb P(\vr\uu) \cdot \bsp \dx\dt, 
 \ea
 and
 \ba\nn
|W_2| = \bigg|-\int_0^T \int_{\Omega} \e^\sigma(\widetilde \vr_\e\widetilde\uu_\e)^{(2)}\cdot \bsp\dx\dt\bigg|\leq \e^\sigma\|(\widetilde \vr_\e\widetilde\uu_\e)^{(2)}\|_{L^4(0,T; L^{2}(\Omega))}\cdot\|\bsp\|_{L^\frac43(0,T; L^{2}(\Omega))}\to 0.
 \ea
Thus we have
\ba\nn
& \int_0^T \int_{\Omega} (\widetilde \vr_\e\widetilde\uu_\e)^{(1)}\cdot \bsp\dx\dt \to \int_0^T \int_{\Omega} \mathbb P(\vr\uu) \cdot \bsp \dx\dt,~\mbox{as}~\e\to 0,
 \ea
which combined with \eqref{conv-P(ru)(1)} yields

\ba\label{conv-P(ru)(1')}
(\widetilde \vr_\e\widetilde\uu_\e)^{(1)} \to {\mathbb P(\vr\uu)} \quad \mbox{weakly in} \ L^4(0,T; L^{2}(\Omega;\R^3)).
\ea
Since $\partial_t (\widetilde\vr_\e\widetilde \uu_\e) ^{(1)}$ is uniformly bounded in $L^{\frac{4}{3}}(0,T; W^{-1,r_{1}'}(\Omega)) $,  $(\widetilde \vr_\e\widetilde\uu_\e)^{(1)}$ is uniformly bounded in $L^4(0,T; L^{2}(\Omega))$, together with the fact that $\widetilde\uu_\e$ converges weakly to $\uu$ in $L^2(0,T;W_0^{1,2}(\Omega))$ in \eqref{convergence}, an Aubin-Lions type argument in Lemma \ref{Aubin-Lions argument} yields
\ba\label{conv-ruu1}
(\widetilde \vr_\e\widetilde\uu_\e)^{(1)} \otimes \widetilde\uu_\e \to \mathbb P(\vr\uu) \otimes \uu \quad \mbox{in} \ \mathcal{D}'((0,T)\times \Omega).
\ea
Direct computation shows
\be\nn
\e^\sigma(\widetilde \vr_\e\widetilde\vu_\e)^{(2)}\otimes\widetilde\vu_\e \to {\bm 0} \textup{ strongly in }L^{4\over3}(0,T;L^{3\over2}(\Omega;\mathbb{R}^{3\times 3}))\cap L^{4}(0,T;L^{1}(\Omega;\mathbb{R}^{3\times 3})).
\ee
Thus we have
\be\nn
\mathbb P(\widetilde \vr_\e\widetilde\uu_\e) \otimes \widetilde\uu_\e \to \mathbb P(\vr\uu) \otimes \uu \quad \mbox{in} \ \mathcal{D}'((0,T)\times \Omega).
\ee
Together with the uniform estimates of $\mathbb P(\widetilde \vr_\e\widetilde\uu_\e)$ and $\widetilde\uu_\e$, we have
\be\label{nonlinear-covergence}
\mathbb P(\widetilde \vr_\e\widetilde\uu_\e) \otimes \widetilde\uu_\e \to \mathbb P(\vr\uu) \otimes \uu \quad \mbox{weakly in} \ L^{\frac43}(0,T;L^{\frac32}(\Omega;\mathbb{R}^{3\times 3})).
\ee

Since $\dive \widetilde\uu_\e=\dive \uu=0$, applying \eqref{nonlinear-covergence}, we find
\ba\label{norm-convergence}
&\int_0^T \int_{\Omega} \widetilde\vr_\e| \widetilde\uu_\e |^{2} \dx\dt = \int_0^T (\widetilde\vr_\e \widetilde\uu_\e, \widetilde\uu_\e)_{L^{2}(\Omega)}\dt \\
&~~=\int_0^T (\mathbb P(\widetilde\vr_\e \widetilde\uu_\e), \widetilde\uu_\e)_{L^{2}(\Omega)}\dt=\int_0^T \int_{\Omega} \mathbb P(\widetilde\vr_\e \widetilde\uu_\e) \cdot \widetilde\uu_\e \dx\dt\\
&~~~{\to}\int_0^T \int_{\Omega} \mathbb P(\vr\uu) \cdot \uu \dx\dt=\int_0^T ( \mathbb P(\vr\uu),\uu) _{L^{2}(\Omega)}\dt\\
&~~=\int_0^T (\vr\uu,\uu) _{L^{2}(\Omega)}\dt =\int_0^T \int_{\Omega} \vr| \uu|^{2} \dx\dt, ~\mbox{as}~\e\to 0.
 \ea
From \eqref{Covergence of rho} and $\eqref{Covergence of solu}_2$,  it follows that $\sqrt{\widetilde\vr_\e}\widetilde \uu_\varepsilon$ converges to $\sqrt{\vr}\uu$ weakly in $L^{2}((0,T)\times \Omega;\mathbb R^{3})$. Employing \eqref{norm-convergence}, we immediately have
\be\label{strong convergence 1}
\sqrt{\widetilde\vr_\e}\widetilde \uu_\varepsilon\to \sqrt{\vr}\uu \quad\mbox{strongly in } L^{2}((0,T)\times \Omega;\mathbb R^{3}),~\mbox{as}~ \e \rightarrow 0.
\ee
By \eqref{eatimates for u} and \eqref{Covergence of rho}, $\sqrt{\widetilde\vr_\e}\widetilde \uu_\varepsilon$ is uniformly bounded in $L^{2}(0,T;L^6(\Omega;\R^3))$. Hence, $\eqref{strong convergence 1}$ yields, for all $1\leq q<6$, 
\be\label{strong convergence 2}
\sqrt{\widetilde\vr_\e}\widetilde \uu_\varepsilon \to \sqrt{\vr}\uu \quad\mbox{strongly in } L^{2}(0,T;L^q(\Omega;\R^3)),~\mbox{as}~\e \rightarrow 0.
\ee 
Finally, we conclude that for all $1\leq \tilde q<3$,
 \ba\label{convective-conv}
\widetilde\vr_\e\widetilde\uu_\e\otimes \widetilde\uu_\e = \sqrt{\widetilde\vr_\e} \widetilde \uu_\varepsilon \otimes \sqrt{\widetilde\vr_\e}\widetilde \uu_\varepsilon \to \sqrt{\vr}\uu \otimes \sqrt{\vr} \uu = \vr\uu\otimes \uu \ \mbox{strongly in} \ L^{1}(0,T; L^{\tilde q}(\Omega;\R^{3\times 3})).
\ea
Combining \eqref{strong convergence 2}, the boundedness of density $\widetilde\vr_\e$ in \eqref{estimate for rho}, and strong convergence of density $\widetilde\vr_\e$ in \eqref{Covergence of rho}, we obtain the strong convergence of $\widetilde\uu_\e$ as follows
\begin{equation}\label{Strong conv-u}
\widetilde\uu_\e \to \uu \ \mbox{strongly in}\ L^2(0,T; L^{2}(\Omega;\mathbb R^3)).
\end{equation}

\subsection{Limit heat equation}\label{Limit heat equation}
To pass to the limit in the heat equation,  we first observe the fact for any $1\leq q < \infty$:
\ba\label{Covergence of kappa}
\|\kappa_\e - \kappa_f\|_{L^q}^q&=\int_{\Omega \setminus {{\Omega}}_\e} {|\kappa_s - \kappa_f|^q}\dx={|\kappa_s - \kappa_f|^q}\cdot |\Omega \setminus {{\Omega}}_\e |\to 0 \mbox{ as } \e \rightarrow 0.
\ea

 Based on the convergence results for temperature $\Theta_\e$ and velocity $\widetilde\uu_\e$, now we are ready to derive the limit heat equation. For any $\phi \in C^{\infty}_c ((0,T)\times\Omega)$, applying weakly convergence of $\Theta_\e$ in $\eqref{Covergence of solu}_3$ and strong convergence of $\kappa_\e$ in \eqref{Covergence of kappa}, we have
\ba\label{limit-temp-1}
\int_0^T\intO{ \kappa_\e \Grad \Theta_\e \cdot \Grad \phi }\dt\to \int_0^T\intO{ \kappa_f \Grad \Theta \cdot \Grad \phi }\dt, \mbox{ as } \e \rightarrow 0,
\ea
and applying weakly-(*) convergence of $\widetilde\uu_\e$ in $\eqref{Covergence of solu}_2$, we find
\ba\label{limit-temp-2}
\int_0^T\intO{ \widetilde\vu_\e \cdot \Grad F \phi }\dt\to \int_0^T\intO{ \vu \cdot \Grad F \phi }\dt, \mbox{ as } \e \rightarrow 0.
\ea
Applying \eqref{limit-temp-1}-\eqref{limit-temp-2}, passing $\e\to 0$ in \eqref{temp eq}, we can deduce that for any $\phi \in C^{\infty}_c ((0,T)\times\Omega)$,  there holds
\ba\label{limit-temp-equation}
\int_0^T\intO{ \kappa_f \Grad \Theta  \cdot \Grad \phi }\dt = \int_0^T\intO{ \vu  \cdot \Grad F \phi }\dt,
\ea
 which is exactly the weak formulation for heat equation $\eqref{Darcy}_{4}$.
\subsection{Strong convergence of temperature}\label{Strong convergence of temperature}
Actually, on the one hand, the elliptic structure of heat equation can furthermore guarantee higher regularity of temperature $\Theta$. On the other hand, the heat equation in homogenized domain $\O$ can show us the strong convergence of $\Theta_\e$ in some regularity spaces. Now we give specific derivation process to find convergence properties of $\Theta_\e$ in the following proposition.
\begin{proposition} \label{Strong convergence of theta} The temperature sequence $\{\Theta_\e\}_{\e>0}$ satisfy the following strong convergence:
\begin{equation} \nn
\Theta_\e \to \Theta \ \mbox{strongly in}\ L^2(0,T; W^{1,2}_0(\Omega)). 
\end{equation}
\end{proposition}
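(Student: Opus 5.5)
The plan is an energy-type argument: test the $\e$-heat equation and the limit heat equation with the temperatures themselves, use the strong convergence \eqref{Strong conv-u} of $\widetilde\uu_\e$ to pass to the limit on the right-hand side, and then upgrade weak to strong convergence by convergence of (weighted) norms.

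\textbf{Step 1: energy identity for $\Theta_\e$.} By density, \eqref{temp eq} holds for test functions $\phi\in L^2(0,T;W^{1,2}_0(\Omega))$. The right-hand side makes sense since $\widetilde\uu_\e\cdot\Grad F\in L^2L^2$ by \eqref{assumption of F} and \eqref{eatimates for u}. In fact, \eqref{temp eq} is an elliptic problem at each time, so the identity also holds after multiplying by any $\chi\in L^\infty(0,T)$. Choosing $\phi=\Theta_\e$ gives
\[
\int_0^T\intO{\kappa_\e|\Grad\Theta_\e|^2}\dt=\int_0^T\intO{\widetilde\uu_\e\cdot\Grad F\,\Theta_\e}\dt .
\]
Similarly, from \eqref{limit-temp-equation} extended by density,
\[
\int_0^T\intO{\kappa_f|\Grad\Theta|^2}\dt=\int_0^T\intO{\uu\cdot\Grad F\,\Theta}\dt .
\]

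\textbf{Step 2: passage to the limit on the right.} We have $\widetilde\uu_\e\to\uu$ strongly in $L^2L^2$ by \eqref{Strong conv-u}, and $\Theta_\e\rightharpoonup\Theta$ weakly in $L^2L^2$ by \eqref{weak convergence of th}. This strong–weak pairing shows that the right-hand side of the $\e$-identity converges to the right-hand side of the limit identity. Hence
\[
\int_0^T\intO{\kappa_\e|\Grad\Theta_\e|^2}\dt\to\int_0^T\intO{\kappa_f|\Grad\Theta|^2}\dt .
\]

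\textbf{Step 3: strong convergence (the main obstacle).} The difficulty is that the weight $\kappa_\e$ differs from $\kappa_f$ on the holes, and there $\Grad\Theta_\e$ is only bounded in $L^2$, not small. To handle this, expand
\[
\int_0^T\intO{\kappa_\e|\Grad(\Theta_\e-\Theta)|^2}\dt
=\int\kappa_\e|\Grad\Theta_\e|^2-2\int\kappa_\e\Grad\Theta_\e\cdot\Grad\Theta+\int\kappa_\e|\Grad\Theta|^2 .
\]
The first term converges by Step 2. For the second, note that $\kappa_\e\Grad\Theta\to\kappa_f\Grad\Theta$ strongly in $L^2L^2$. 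This follows by dominated convergence, since $|\Omega\setminus\Omega_\e|\to0$ and $\kappa_\e$ is uniformly bounded, as in \eqref{Covergence of kappa}. Paired with the weak convergence of $\Grad\Theta_\e$, the second term tends to $-2\int\kappa_f|\Grad\Theta|^2$. The third term tends to $\int\kappa_f|\Grad\Theta|^2$, again by dominated convergence.

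Altogether the expansion tends to $0$. Since $\kappa_\e\ge\min\{\kappa_s,\kappa_f\}>0$, this yields $\Grad\Theta_\e\to\Grad\Theta$ strongly in $L^2L^2$. By the Poincar\'e inequality, as in \eqref{uniform bound of theta}, we conclude $\Theta_\e\to\Theta$ strongly in $L^2(0,T;W^{1,2}_0(\Omega))$.

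Finally, the whole family converges, not just a subsequence: for a given $\uu$, the limit $\Theta$ is uniquely determined by \eqref{limit-temp-equation}.
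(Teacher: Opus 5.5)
Your proof is correct and has the same skeleton as the paper's. You test the $\e$-problem and the limit problem with the temperatures, expand $\int_0^T\intO{\kappa_\e|\Grad(\Theta_\e-\Theta)|^2}\dt$, and use the strong convergence \eqref{Strong conv-u} of $\widetilde\uu_\e$ to pass to the limit in $\int_0^T\intO{\widetilde\uu_\e\cdot\Grad F\,\Theta_\e}\dt$. Where you differ is in the other two terms.

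\textbf{The paper's route.} The paper also tests \eqref{temp eq} with $\Theta$, which turns the cross term $\int\kappa_\e\Grad\Theta_\e\cdot\Grad\Theta$ into $\int\widetilde\uu_\e\cdot\Grad F\,\Theta$. It then isolates $K_3=\int(\kappa_\e-\kappa_f)|\Grad\Theta|^2$. To show $K_3\to 0$ it invokes elliptic regularity $\Theta\in L^2(0,T;W^{2,2}\cap W^{1,2}_0(\Omega))$. This gives $\Grad\Theta\in L^2L^{r_1}$ with $r_1>2$, which is then combined by H\"older with $\|\kappa_\e-\kappa_f\|_{L^{q^*}}\to0$.

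\textbf{Your route.} You handle the cross term by weak--strong pairing: $\Grad\Theta_\e\rightharpoonup\Grad\Theta$ weakly and $\kappa_\e\Grad\Theta\to\kappa_f\Grad\Theta$ strongly in $L^2L^2$. For the third term you use absolute continuity of $\int|\Grad\Theta|^2$ over $(0,T)\times(\Omega\setminus\Omega_\e)$, a set of vanishing measure.

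\textbf{What each buys.} Your route removes the need for the $W^{2,2}$ regularity of $\Theta$ entirely. The only density fact you need is that $C_c^\infty((0,T)\times\Omega)$ is dense in $L^2(0,T;W^{1,2}_0(\Omega))$, and you state this correctly. The paper's H\"older bound gives an explicit rate for $K_3$. That buys nothing for the conclusion, since $K_1$ converges without a rate, so your argument is the leaner one.

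\textbf{Caveat on your last remark.} You claim the whole family converges. But $\uu$ is itself only a subsequential limit, and no uniqueness is available for the limit momentum system. Uniqueness for \eqref{limit-temp-equation} therefore shows only that no extraction beyond the subsequence along which $\widetilde\uu_\e\to\uu$ is needed. It does not give convergence of the whole family $\{\Theta_\e\}_{\e>0}$.
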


\begin{proof} 
From $\eqref{Covergence of solu}_3$, we have 
\ba\nn
\Theta_\e \to \Theta \ \mbox{weakly in}\ L^2(0,T; W^{1,2}_0(\Omega)). 
\ea
As a matter of fact, since $\Theta$ satisfies the limit problem \eqref{limit-temp-equation}, along with \eqref{assumption of F} and $\eqref{Covergence of solu}_3$, elliptic theory in \cite{LADUR}  gives higher regularities for $\Theta$ as follows
\begin{equation} \label{regularity of th}
\Theta \in L^2 (0,T; W^{2,2} \cap W^{1,2}_0(\Omega)).
\end{equation}

Since $C^{\infty}_c ((0,T)\times\Omega)$ is dense in $L^2 (0,T; W^{2,2} \cap W^{1,2}_0(\Omega))$, we take $\phi=\Theta$ as a test function in \eqref{limit-temp-equation} to find 
\ba\label{temp eq-3}
\int_0^T\intO{ \kappa_f \Grad\Theta \cdot \Grad \Theta}\dt =\int_0^T \intO{\vu \cdot \Grad F \Theta}\dt.
\ea
Taking $\phi=\Theta \in L^2 (0,T; W^{2,2} \cap W^{1,2}_0(\Omega))$ in \eqref{temp eq} and $\phi=\Theta_\e \in L^2(0,T; W^{1,2}_0(\Omega))$ in \eqref{temp eq} as test functions respectively, we have
\ba\label{temp eq-4}
\int_0^T\intO{ \kappa_\e\Grad \Theta_\e \cdot \Grad \Theta}\dt= \int_0^T\intO{ \widetilde\vu_\e \cdot \Grad F \Theta }\dt,
\ea
\ba\label{temp eq-5}
\int_0^T\intO{ \kappa_\e \Grad \Theta_\e \cdot \Grad \Theta_\e}\dt= \int_0^T\intO{ \widetilde\vu_\e \cdot \Grad F \Theta_\e }\dt.
\ea

Utilizing the equations from \eqref{temp eq-3} to \eqref{temp eq-5}, we find
\ba\label{strong-conver of theta}
\begin{split}
 &\int_0^T\intO{ \kappa_\e |\Grad \Theta_\e - \Grad \Theta|^2 }\dt\\
& ~~=\int_0^T \intO{ \kappa_{\e}  \Grad \Theta_\e \cdot   \Grad \Theta_\e  }\dt-2\int_0^T\intO{ \kappa_{\e}  \Grad \Theta_\e \cdot   \Grad \Theta  }\dt+\int_0^T\intO{ \kappa_{f}  \Grad \Theta \cdot   \Grad \Theta  }\dt\\
& ~~\quad+\int_0^T\intO{ (\kappa_{\e}-\kappa_{f})  \Grad \Theta \cdot   \Grad \Theta  }\dt\\
& ~~=\int_0^T\intO{ \widetilde\vu_\e \cdot \Grad F \Theta_\e }\dt-2\intO{ \widetilde\vu_\e \cdot \Grad F \Theta }\dt+\int_0^T\intO{\vu \cdot \Grad F \Theta}\dt\\
&~~\quad+\int_0^T\intO{ (\kappa_{\e}-\kappa_{f})  \Grad \Theta \cdot   \Grad \Theta  }\dt\\
& ~~=\int_0^T\intO{ \widetilde\vu_\e \cdot \Grad F (\Theta_\e-\Theta) }\dt+\int_0^T\intO{(\vu- \widetilde\vu_\e) \cdot \Grad F \Theta}\dt\\
&~~\quad+\int_0^T\intO{ (\kappa_{\e}-\kappa_{f})  \Grad \Theta \cdot   \Grad \Theta  }\dt\\
&~~:=K_1+K_2+K_3,
\end{split}
\ea
with
\ba\nn
&K_1=\int_0^T\intO{ \widetilde\vu_\e \cdot \Grad F (\Theta_\e - \Theta )} \dt,\\
&K_2=\int_0^T\intO{(\vu- \widetilde\vu_\e) \cdot \Grad F  \Theta }\dt,\\
&K_3=\int_0^T\intO{(\kappa_\e-\kappa_f )\Grad \Theta \cdot\Grad \Theta}\dt.
\ea

Moreover, using the weak convergence of $\Theta_\e$ in $\eqref{Covergence of solu}_3$ and the strong convergence of $ \widetilde\vu_\e$ in \eqref{Strong conv-u}, we deduce
\ba\label{time-space1}
K_1=\int_0^T \intO{ \widetilde\vu_\e \cdot \Grad F (\Theta_\e - \Theta )}\dt \to 0.
\ea
By  \eqref{assumption of F}, \eqref{Strong conv-u} and \eqref{regularity of th} we obtain
\ba\label{time-space2}
K_2=\int_0^T\intO{(\vu- \widetilde\vu_\e) \cdot \Grad F  \Theta }\dt\to 0.
\ea
For the last term on the right side of \eqref{strong-conver of theta}, by \eqref{Covergence of kappa} and \eqref{regularity of th}, we have 
\ba\label{time-space3}
 |K_3|\leq C\|\kappa_\e-\kappa_f \|_{L^{q^*}}\| \Grad \Theta\|_{ L^2L^{r_1}}\| \Grad \Theta \|_{L^2L^2}\leq C\|\kappa_\e-\kappa_f \|_{L^{q^*}}\to 0,
 \ea
 where $r_1\in(2,3)$ and $q^*\in(6,+\infty)$ are given in \eqref{r1}.

To conclude, we combine \eqref{time-space1}--\eqref{time-space3} to derive
\ba
 \int_0^T\intO{|\Grad \Theta_\e - \Grad \Theta|^2 }\dt\leq C\int_0^T\intO{\kappa_\e|\Grad \Theta_\e - \Grad \Theta|^2 } \dt\to 0, \mbox{ as } \e \rightarrow 0,
\ea
which with Poincar\'e inequality leads to
\begin{equation} \nn
\Theta_\e \to \Theta \ \mbox{strongly in}\ L^2(0,T; W^{1,2}_0(\Omega)).
\end{equation}
\end{proof}

\subsection{Limit momentum equations}\label{Limit momentum equations}
 After the establishment of these important regularities and convergences related to $(\widetilde\vr_\e, \widetilde\vu_\e, \Theta_\e)$, we are now in a position to derive the limit momentum equations.
 
 For each $\bvp \in C_{c}^{\infty}([0,T)\times\Omega;\R^{3}), \ \dive \bvp =0$, using \eqref{Covergence of solu}, we have
\ba\label{conv-1}
\int_0^T \int_\Omega (\widetilde\vr_\e \widetilde\uu_\e) \cdot \partial_{t}\bvp \dx\dt \to
\int_0^T \int_\Omega (\vr\uu) \cdot \partial_{t}\bvp \dx\dt, \mbox{ as } \e \rightarrow 0.
\ea
By \eqref{convective-conv}, we immediately have 
\ba\label{conv 2}
\int_0^T \int_{\Omega} \widetilde{\vr}_\e\widetilde{\uu}_\varepsilon \otimes \widetilde{\uu}_\varepsilon :\Grad \bvp \dx\dt \to\int_0^T \int_{\Omega}\vr\uu \otimes\uu:\Grad \bvp \dx\dt, \mbox{ as } \e \rightarrow 0.
\ea
Since $\mu$ is Lipschitz continuous on $\mathbb R$, using Proposition \ref{Strong convergence of theta}, we have
\ba\nn
\mu({\Theta_\e})\to\mu({\Theta})~ \mbox{strongly in}\ L^2((0,T)\times\Omega), \mbox{ as } \e \rightarrow 0.
\ea
Then we make use of the weak convergence for $\uu_\e$ in \eqref{Covergence of solu} to derive 
\ba\label{conv 3}
&\int_0^T \int_{\Omega} \mathbb{S}(\Theta_\e, \Grad \widetilde\vu_\e):\Grad \bvp \dx\dt=\int_0^T \int_{\Omega} \mu(\Theta_\e) \left( \Grad \widetilde\vu_\e + \Grad^{\rm T} \widetilde\vu_\e \right):\Grad \bvp \dx\dt\\
& \quad\to\int_0^T \int_{\Omega} \mu(\Theta) \left( \Grad \vu + \Grad^{\rm T} \vu \right):\Grad \bvp \dx\dt=\int_0^T \int_{\Omega} \mathbb{S}(\Theta, \Grad \vu):\Grad \bvp \dx\dt, \mbox{ as } \e \rightarrow 0.
\ea
Moreover, the weak convergence of $\Theta_\e$ in \eqref{Covergence of solu} gives
\ba\label{conv 4}
\int_0^T \int_{\Omega} \Theta_\e \Grad F \cdot \bvp \dx\dt \to\int_0^T \int_{\Omega} \Theta \Grad F \cdot \bvp \dx\dt, \mbox{ as } \e \rightarrow 0.
\ea
Since initial value conditions \eqref{initial-rho-u} shows $\widetilde\vr_{\e}^{0}\widetilde\uu_\e^0 \to \vr_0\uu_0 \ \mbox{strongly in} \ L^2(\Omega)$, we find
\ba\label{conv-5}
\int_\Omega (\widetilde\vr_\e^0\widetilde \uu_\e^0) \cdot \bvp(0,x) \,\dx \to \int_\Omega (\vr_0\uu_0) \cdot \bvp(0,x) \dx, \mbox{ as } \e \rightarrow 0.
\ea
Additionally, \eqref{est-F-3D} implies
\ba\label{limit-GG}
\l\GG_\varepsilon, \bvp \r\to 0, \mbox{ as } \e \rightarrow 0.
\ea

Using \eqref{conv-1}--\eqref{limit-GG}, passing $\e\to 0$ in \eqref{momentum eq-1}, we can deduce that for each $\bvp \in C_{c}^{\infty}([0,T)\times\Omega;\R^{3}), \ \dive \bvp =0$,
\ba\label{limit-momentum-equ}
&\int_{0}^{T} \int_\Omega \vr\uu \cdot \d_{t} \bvp +\vr\uu\otimes \uu : \Grad \bvp \dx\dt-\int_{0}^{T} \int_\Omega \mathbb{S}(\Theta, \Grad \vu) :\Grad \bvp\dx\dt \\
&=\int_{0}^{T} \int_\Omega \Theta \Grad F \cdot \bvp\dx\dt+ \int_\Omega \vr_0\uu_0 \cdot \bvp(0,x) \dx.
\ea

Combining \eqref{estimate for rho}, \eqref{Strong conv-u}, \eqref{limit-temp-equation}, \eqref{limit-momentum-equ}, Proposition \ref{compact-density} and Proposition \ref{Strong convergence of theta}, we complete the proof of Theorem \ref{Main theorem}.

	\paragraph{Conflict of interest}
	The authors declare no conflict of interest in this paper.
	
	
	

\end{document}